\documentclass[a4paper,11pt]{amsart}
\usepackage{iftex}
\ifPDFTeX
\usepackage[utf8]{inputenc}
\fi

\usepackage{amsmath,amssymb,amsfonts,amsthm,bm}
\usepackage{graphicx}
\usepackage{enumitem}
\usepackage{geometry}
\usepackage{color}

\usepackage[final]{microtype}
\microtypesetup{nopatch=footnote}
\usepackage{xurl}
\usepackage{hyperref}

\newcommand*{\abs}[1]{\lvert#1\rvert}
\newcommand{\res}{\!\upharpoonright\!}

\DeclareMathOperator{\dom}{dom}
\DeclareMathOperator{\ran}{ran}

\newtheorem{theorem}{Theorem}[section]
\newtheorem{lemma}[theorem]{Lemma}

\newtheorem{corollary}[theorem]{Corollary}
\newtheorem{question}[theorem]{Question}

\theoremstyle{definition}
\newtheorem{definition}[theorem]{Definition}

\begin{document}
\title{Robinson Splitting Theorem and \(\Sigma_1\) Induction}

\author[Liu]{Yong Liu}
\address[Liu]{School of Information Engineering\\
Nanjing Xiaozhuang University\\
CHINA}
\email{\href{mailto:liuyong@njxzc.edu.cn}{liuyong@njxzc.edu.cn}}

\author[Peng]{Cheng Peng}
\address[Peng]{Institute of Mathematics\\
Hebei University of Technology\\
CHINA}
\email{\href{mailto:pengcheng@hebut.edu.cn}{pengcheng@hebut.edu.cn}}

\author[Sun]{Mengzhou Sun}
\address[Sun]{Institute of Mathematics, University of Warsaw, Poland}
\email{\href{mailto:m.sun3@uw.edu.pl}{m.sun3@uw.edu.pl}}

\subjclass[2020]{03F30,03D25,03H15}

\keywords{Robinson Splitting Theorem, reverse recursion theory, inductive strength}

\thanks{Liu's research was partially funded by Nanjing Xiaozhuang University (No.~2022NXY39). Peng's research was partially funded by the National Natural Science Foundation of China (No.~12271264).}

\begin{abstract}
    The Robinson Splitting Theorem states that a c.e.\ degree \(\bm{b}\) splits over any low c.e.\ degree \(\bm{c}<\bm{b}\).
    We prove that a weaker version of this theorem holds in models of \(\mathsf{P}^-+\mathsf{I}\Sigma_1\), with lowness replaced by \emph{superlowness}.
\end{abstract}

\maketitle

\section{Introduction}
Friedberg--Muchnik Theorem~\cite{RN145,RN146} and the Sacks Splitting Theorem~\cite{RN158} are fundamental results in recursion theory, proved by finite-injury priority arguments.
From the viewpoint of injury bounds, the Sacks Splitting Theorem differs from the Friedberg--Muchnik Theorem.
If the number of injuries can be bounded effectively, the argument is called a bounded-type finite-injury argument.
In this sense, the Friedberg--Muchnik construction is bounded type, while the Sacks Splitting construction is unbounded type.
This distinction is also confirmed in reverse recursion theory.
Chong and Mourad~\cite{RN97} proved that the Friedberg--Muchnik Theorem is provable in models of \(\mathsf{P}^-+\mathsf{I}\Sigma_0+\mathsf{B}\Sigma_1+\mathrm{Exp}\).
By contrast, the Sacks Splitting Theorem is equivalent to \(\mathsf{I}\Sigma_1\) over the base theory \(\mathsf{P}^-+\mathsf{I}\Sigma_0+\mathsf{B}\Sigma_1+\mathrm{Exp}\)~\cite{RN82,RN105}.
We remark that it is not the case that bounded-type finite-injury arguments can always be carried out in models of \(\mathsf{P}^-+\mathsf{I}\Sigma_0+\mathsf{B}\Sigma_1+\mathrm{Exp}\); Chong and Yang~\cite{RN173} showed that the existence of a noncomputable low c.e.\ degree is equivalent to \(\mathsf{I}\Sigma_1\) over \(\mathsf{P}^-+\mathsf{I}\Sigma_0+\mathsf{B}\Sigma_1+\mathrm{Exp}\).

In~\cite{RN82}, Mytilinaios exploited a blocking technique from \(\alpha\)-recursion theory introduced by Shore~\cite{RN382}, and proved:
\begin{theorem}[Sacks Splitting Theorem~\cite{RN158,RN82}]\label{thm:sacks}
    The following holds in models of \(\mathsf{P}^-+\mathsf{I}\Sigma_1\):
    Let \(\bm{b}\) be a c.e.\ degree.
    Then there exist incomparable c.e.\ degrees \(\bm{a_0}\) and \(\bm{a_1}\) such that \(\bm{b}= \bm{a_0}\vee \bm{a_1}\).
\end{theorem}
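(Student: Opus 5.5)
The plan is to carry out the classical Sacks splitting construction inside a model \(\mathcal M\models\mathsf P^-+\mathsf I\Sigma_1\), organised by Shore's blocking technique as Mytilinaios did. Since \(\bm b\) is c.e., I may fix an \(\mathcal M\)-c.e. set \(B\) of degree \(\bm b\) and, using \(\mathsf I\Sigma_1\), an \(\mathcal M\)-recursive one-to-one enumeration \(\langle b_s\rangle\) of it. During the construction I will put each \(b_s\) into exactly one of two sets \(A_0,A_1\). Then \(A_0\oplus A_1\equiv_T B\) holds for the usual reasons. Each \(A_i\le_T B\), because \(x\in A_i\) iff \(x\in B\) and \(x\) was assigned to \(A_i\) at the stage it entered. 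Conversely, \(B\le_T A_0\oplus A_1\), since \(B=A_0\cup A_1\) as a disjoint union. In \(\mathcal M\) these reductions must be read as reductions on \(\mathcal M\)-finite sets (coded initial segments), and \(\mathsf I\Sigma_1\) guarantees that the relevant \(\Sigma_1\) definitions give bounded sets.

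The real work is the incomparability requirements \(R_{e,i}\colon \Phi_e^{A_i}\neq B\). These suffice: if \(A_{1-i}\le_T A_i\), then \(B\le_T A_i\), and this also rules out recursiveness. Assuming \(\bm b\) is not recursive, these requirements also give \(\bm a_i<\bm b\). (If \(\bm b=\bm 0\) there is nothing to prove, or one reads the theorem with that nondegeneracy hypothesis.) I would use the length of agreement \(\ell(e,i,s)\) and the restraint \(r(e,i,s)\) given by the use of computations up to the maximum length so far. A number \(b_s\) goes to \(A_i\) or \(A_{1-i}\) so as to avoid injuring the highest-priority restraint it would violate.

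To make the priority ordering survive failure of \(\Sigma_2\)-induction, I follow Shore and block requirements. I choose an \(\mathcal M\)-recursive or \(\Sigma_1\)-definable sequence of blocks \(I_0<I_1<\cdots\). Requirements within the same block are treated with equal priority, meaning they are protected collectively by a single combined restraint. The blocks are chosen so that the sequence of blocks is indexed by a bounded or cofinal initial segment that behaves like \(\omega\) for the purposes of \(\Sigma_1\) induction. Concretely, I would take block \(k\) to be the set of requirements with \(e\) below a bound determined by the \(\Sigma_1\) behaviour of a fixed cut: in a model of \(\mathsf I\Sigma_1\neg\mathsf B\Sigma_2\) one picks a \(\Sigma_2\)-cut and partitions indices into \(\mathcal M\)-finite pieces over it.

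The verification then shows, by \(\Sigma_1\)-induction on block number, that each block is injured only \(\mathcal M\)-finitely often. Its combined restraint therefore has a limit, and the lengths of agreement for requirements in the block are bounded. The latter follows from the standard argument: if \(\ell\) were unbounded, \(B\) would be computable from the limiting information, contradicting the nonrecursiveness of \(B\).

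The main obstacle is exactly this verification. The finiteness of injuries to a block is naturally a \(\Sigma_2\) statement, so induction on it is not directly available from \(\mathsf I\Sigma_1\). The blocking must be designed so that the total injury to the first \(k\) blocks is bounded by an \(\mathcal M\)-finite quantity that can be computed in a \(\Sigma_1\) way, for example via the fact that injury to block \(k\) happens only when some \(b_s\) falls below a restraint bounded by the stage at which earlier blocks settled. I would first handle the case \(\mathcal M\models\mathsf B\Sigma_2\), where a direct induction works. For the case \(\neg\mathsf B\Sigma_2\), I would use a \(\Sigma_2\)-cofinal \(\mathcal M\)-finite sequence to build the blocks, so that each block's settling time is reached by a \(\Sigma_1\)-inductive argument. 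Since the paper attributes the result to Mytilinaios~\cite{RN82}, I would follow that argument in this step.
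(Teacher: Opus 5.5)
Your proposal has a genuine gap at exactly the point you flag and then defer to Mytilinaios. You note that ``block \(k\) is injured only boundedly often'' is \(\Sigma_2\) in \(k\), so \(\Sigma_1\)-induction on block number cannot establish it. Your replacement is a static partition of indices over a \(\Sigma_2\) cut, with a ``\(\Sigma_1\)-computable'' bound on the total injury to the first \(k\) blocks. This is never specified or justified: the construction is computable and cannot simply consult a partition defined from a \(\Sigma_2\) cut. So the theorem is not actually proved.

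The paper avoids induction over blocks altogether by using \emph{dynamic priority assignments}. Whenever a block \(\Lambda(i)\) is initialized at stage \(s\), all \(P_j\) between its tail and \(s\) are absorbed into \(\Lambda(i)\), and later requirements are shifted down. Hence \(\lambda_s(P_e)\) is nonincreasing in \(s\). The set \(\{i\mid \exists s\,\lambda_s(P_e)=i\}\) is \(\Sigma_1\), so by \(\mathsf{L}\Sigma_1\) it has a least element, and this is the final block index \(i\). Once it is reached (at a stage beyond \(e\)), \(\Lambda(i-1)\) can never be initialized again, since that would pull \(P_e\) into \(\Lambda(i-1)\). Thus ``the preceding block is eventually quiet'' comes from the least number principle, not from induction. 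Only two concrete steps \(\Lambda(i-1)\Rightarrow\Upsilon(i-1)\Rightarrow\Lambda(i)\) remain, each an application of the block lemma.

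Second, inside one block that is no longer initialized, ``its combined restraint has a limit'' does not follow from each member's restraint stabilizing. The block may have nonstandard size, and bounding \(M\)-finitely many individually existing (\(\Pi_1\)-characterized) settling stages is an instance of \(\mathsf{B}\Sigma_2\), which you do not have. The paper gets this bound (Lemma~\ref{lem:block_of_requirements}) by giving each \(P_e\) a local computable functional \(\Delta_e\), defined at expansionary stages together with a restraint. Then two things are partial computable functions on bounded domains: the stage at which \(P_e\) diagonalizes, and the stage at which \(\Delta_e(x)\) gets defined for \(x\) below a bound \(b\). Both are bounded by Friedman's Lemma~\ref{lem:Friedman}. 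The bound \(b\) on \(\bigcup_{e\in\Lambda}\dom\Delta_e\) comes from noncomputability of the target set, because after the last diagonalization every definition of \(\Delta_e\) is correct. Your use-based restraint on lengths of agreement gives you no such \(\Sigma_1\) handles.

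Finally, blocks must be homogeneous. A block containing both some \(R_{e,0}\) and some \(R_{e',1}\) cannot be protected by a single combined restraint, because a number entering \(B\) below both restraints must injure one of them. The paper interleaves blocks \(\Lambda(i)\), which restrain only \(A_0\), with blocks \(\Upsilon(i)\), which restrain only \(A_1\). A number whose lowest-priority threatened block is \(\Lambda(i)\) then goes into \(A_1\) and injures only \(\Upsilon(i)\) and the blocks below it.
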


Robinson proved the following refinement of Sacks Splitting Theorem:
\begin{theorem}[Robinson's Splitting Theorem~\cite{RN129}]\label{thm:robinson}
    Let \(\bm{b}>\bm{c}\) be c.e.\ degrees with \(\bm{c}\) a low c.e.\ degree.
    Then there exist incomparable c.e.\ degrees \(\bm{a_0}\) and \(\bm{a_1}\) such that \(\bm{b}= \bm{a_0}\vee \bm{a_1}\) with \(\bm{a_i}>\bm{c}\) for \(i<2\).
\end{theorem}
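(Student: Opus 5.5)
We prove Robinson's theorem in the standard setting (over the full standard model), using the classical Robinson construction: a Sacks-style splitting combined with the lowness of \(\bm{c}\) to guess at \(\Sigma_1\) facts relative to \(C\). Fix c.e.\ sets \(B\in\bm{b}\), \(C\in\bm{c}\) with \(C\le_T B\); replacing \(B\) by \(B\oplus C\) we may assume \(C\le_T B\) via a c.e.\ enumeration compatible with \(B\). We build c.e.\ sets \(A_0,A_1\) by splitting the enumeration of \(B\): whenever \(x\) enters \(B\) at stage \(s\), we put \(x\) into exactly one of \(A_0,A_1\). This gives \(B=A_0\sqcup A_1\), so \(A_i\le_T B\) and \(B\le_T A_0\oplus A_1\). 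To get \(\bm{a_i}>\bm{c}\) we set \(\hat A_i=A_i\oplus C\); then \(\hat A_0\oplus\hat A_1\equiv_T B\). It remains to meet, for each \(e\) and \(i<2\), the requirements
\[
R_{e,i}\colon\ \Phi_e^{A_i\oplus C}\neq B ,
\]
which yield \(B\not\le_T \hat A_i\). In particular \(\hat A_{1-i}\not\le_T\hat A_i\), and \(\hat A_i>\bm{c}\) strictly, since \(\hat A_i\equiv_T C\) would give \(B\le_T\hat A_{1-i}\).

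The strategy for \(R_{e,i}\) is Sacks-style length-of-agreement preservation. Let \(\ell_{e,i}(s)\) be the length of agreement between \(\Phi_{e,s}^{A_{i,s}\oplus C_s}\) and \(B_s\). The requirement tries to preserve \(A_i\) on the use of the computations below \(\ell\) by forcing elements entering \(B\) below that use into \(A_{1-i}\) instead. The problem, compared with Sacks, is that the computations also query \(C\), which we do not control. A \(C\)-change may destroy a computation, and if we restrain on the basis of wrong \(C\)-information, we can injure lower priority requirements infinitely often or fail to meet \(R_{e,i}\).

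Lowness is used exactly here, as in Robinson's original argument. Consider the \(\Sigma_1^{C}\) questions ``there is a finite string \(\sigma\) extending the current \(A_i\)-approximation and a \(C\)-correct computation \(\Phi_e^{\sigma\oplus C}(x)\downarrow\neq B(x)\) or witnessing agreement beyond \(n\)''. Since \(C'\le_T\emptyset'\), by the limit lemma there is a computable approximation \(g(q,s)\) converging to the answer to each such question. We only impose restraint for \(R_{e,i}\) at stages where the \(\Delta_2\) guess certifies that the relevant \(C\)-use is correct, confirmed by also checking \(C_s\) on that use. Each question is asked at most finitely often with wrong answers, so each requirement is injured or acts falsely only finitely often. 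This is the finite-injury verification, and the restraint \(r_{e,i}=\lim_s\) is finite by the usual Sacks argument. If \(\Phi_e^{A_i\oplus C}=B\), then \(B\) would be computable from the preserved computations, hence computable, contradicting \(\bm{b}>\bm{c}\ge\bm 0\). More carefully, it would give \(B\le_T C\), contradicting \(\bm{b}>\bm{c}\).

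The main obstacle is this last verification: showing that when \(\Phi_e^{A_i\oplus C}=B\), the length of agreement is unbounded and the preserved computations let \(C\) compute \(B\). Using the lowness guesses, \(C\) can search for an \(A_i\)-string with a true computation, and preservation ensures that string agrees with \(A_i\). I expect the delicate point to be arranging the questions so that \(C\)-correctness is \(\Sigma_1^C\), making lowness applicable, and checking that false guesses cause only finitely many injuries.
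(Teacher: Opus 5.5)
Your overall architecture matches the paper's. You split $B$, set $\hat A_i=A_i\oplus C$, and meet $\Phi_e^{A_i\oplus C}\neq B$ by restraining $A_i$ only on computations whose $C$-part has been certified, so that $\Phi_e^{A_i\oplus C}=B$ would give $B\le_T C$. The paper does the same with its $P_e$/$Q_e$ requirements and $C$-functionals $\Delta_e^C$, and obtains the classical theorem from that construction in Section~\ref{sec:discussion} (under $\mathsf{B}\Sigma_2$, so in $\mathbb{N}$, lowness suffices).

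\textbf{The gap.} The step that makes Robinson's theorem nontrivial rests on an invalid inference: ``each question is asked at most finitely often with wrong answers, so each requirement is injured or acts falsely only finitely often.''
\begin{enumerate}[label=(\roman*)]
\item The limit lemma only says that for each \emph{fixed} $\Sigma_1^C$ question $q$, $g(q,s)$ is eventually correct. Your questions depend on $x$, on $n$, and on the current approximation to $A_i$, so one requirement poses infinitely many distinct questions.
\item Reading $g(q_s,s)$ for a fresh $q_s$ at stage $s$ carries no information: it can be wrong at every stage. Then false certifications recur forever, and with them restraints on $C$-incorrect computations and injuries to lower-priority requirements.
\item As phrased, the questions are not even $\Sigma_1^C$. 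They mention $B(x)$, and ``$\Phi_e^{\sigma\oplus C}(x)\downarrow=1\wedge x\notin B$'' is a $\Sigma_1^C$ condition conjoined with a $\Pi_1$ one. That is not $\Sigma_1^C$ in general, since $B\nleq_T C$.
\item For the same reason, ``the usual Sacks argument'' does not bound the restraint. Once computations query $C$, the length of agreement can be large infinitely often through $C$-incorrect computations even when $\Phi_e^{A_i\oplus C}\neq B$. Bounded restraint is what the certification mechanism must deliver, not something you can cite.
\end{enumerate}

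\textbf{What is missing} is Robinson's trick proper, which is the paper's certification procedure.
\begin{enumerate}[label=(\alph*)]
\item All certifications for one argument $x$ of one requirement, within one attempt (refreshed after injury or initialization), go through a \emph{single} c.e.\ set of strings $W_j$, with $j$ given by the recursion theorem.
\item So there is only one question: whether $j\in X=\{j\mid \exists\sigma\in W_j,\ C\in[\sigma]\}$. Since $X\le_1 C'\le_T\varnothing'$, it has a limit approximation $p(j,s)$.
\item A computation with $C$-use $\sigma$ is handled by putting $\sigma$ into $W_j$ and searching for the least $t\ge s$ with $C_t\notin[\sigma]$ or $p(j,t)=1$. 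The search halts, because $C\in[\sigma]$ implies $j\in X$.
\item Only in the second case is the computation certified, $\Delta_e^\sigma(x)$ defined, and $A_i$ restrained.
\item Either $\lim_s p(j,s)=0$, and only finitely many certifications ever occur; or some $\sigma\in W_j$ is a true initial segment of $C$. In the latter case its certified computation is preserved forever and nothing further is certified at $x$ (Lemma~\ref{lem:F_bounded}).
\end{enumerate}

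Bounded restraint for the whole requirement then comes from the least-argument analysis of Lemma~\ref{lem:Robinson_single_requirement_warmup}. Take the least $a$ with $\Delta_e^C(a)\uparrow$ or $\Delta_e^C(a)\neq B(a)$; it exists because $B\nleq_T C$. Arguments below $a$ eventually stop acting. At $a$, the requirement either diagonalizes permanently or is certified only finitely often, so from some stage on the strategy halts at $a$.

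The step you flag as the main obstacle is the easy one once this is in place. A true computation with $C\in[\sigma]$ must get certified (Lemma~\ref{lem:Px_lemma}(2)), and $C$ can then read $B$ off the axioms $\Delta_e^\sigma(x)$ with $C\in[\sigma]$.
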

In the construction, Robinson introduced a method, now often called Robinson's trick, for approximating an answer computable from \(\bm{0}'\).
This method allows only finitely many wrong guesses, which makes the construction possible.
(Lachlan's Nonsplitting Theorem~\cite{RN130} asserts that there exists a c.e.\ degree that cannot split over any strictly lesser c.e.\ degree.)
This introduces another finiteness issue in addition to the finite-injury behavior in the Sacks Splitting Theorem.
Can this additional finiteness be treated in models of \(\mathsf{I}\Sigma_1\)?
Although it is often believed that finite-injury priority arguments can be carried out in models of \(\mathsf{I}\Sigma_1\), this is not immediate for the Robinson Splitting Theorem.
We prove a weaker version of Robinson's theorem in models of \(\mathsf{I}\Sigma_1\), replacing the lowness of \(\bm{c}\) with \emph{superlowness} (see Section~\ref{sec:discussion} for more discussion).
Recall that a set \(C\) is \emph{low} if \(C'\le_T \varnothing'\), and \emph{superlow} if \(C'\le_{wtt} \varnothing'\).
In models of \(\mathsf{P}^-+\mathsf{I}\Sigma_1\), we also have \(A\le_{wtt}\varnothing'\) if and only if \(A\) is \(\omega\)-c.e.\@. 
Recall that \(A\) is \(\omega\)-c.e.\@ if there exist computable function \(f\) and \(g\) such that \(A(x)=\lim_s f(x,s)\), \(f(x,0)=0\), and \(|\{s\mid f(x,s+1)\neq f(x,s)\}|< g(x)\).
Hence, if \(C\) is superlow, then \(C'\) is \(\omega\)-c.e.\@, which further implies that \(C\) is hyperregular (see Lemma~\ref{lem:superlow_hyperregular}).
The properties of being \(\omega\)-c.e.\ and hyperregular are two key ingredients in our proof (especially for Lemma~\ref{lem:Robinson_block_of_requirements}).

Our main result is the following:
\begin{theorem}\label{thm:main}
    The following holds in models of \(\mathsf{P}^-+\mathsf{I}\Sigma_1\):
    Given c.e.\ degrees \(\bm{b},\bm{c},\bm{d}\) with \(\bm{c}\) superlow and \(\bm{d}\nleq \bm{c}\), there exist incomparable c.e.\ degrees \(\bm{a_0},\bm{a_1}\)
    such that \(\bm{b}= \bm{a_0}\vee \bm{a_1}\) with \(\bm{d}\nleq \bm{a_i}\vee \bm{c}\) for \(i<2\).
\end{theorem}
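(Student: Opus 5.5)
We will carry out a Robinson-style splitting construction inside a model \(M\models\mathsf{P}^-+\mathsf{I}\Sigma_1\), combined with Mytilinaios' blocking technique from the proof of Theorem~\ref{thm:sacks}. Fix c.e.\ sets \(B,C,D\) in \(\bm{b},\bm{c},\bm{d}\) with enumerations \(\{B_s\},\{C_s\},\{D_s\}\). We will build \(A_0,A_1\) by splitting the enumeration of \(B\): whenever \(x\) enters \(B\) at stage \(s\), we put \(x\) into exactly one of \(A_0,A_1\). This gives \(A_0\sqcup A_1=B\), so \(\bm{a_0}\vee\bm{a_1}=\bm{b}\) automatically. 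The requirements are
\[
R_{e,i}:\ \Phi_e^{A_i\oplus C}\neq D,\qquad e\in M,\ i<2 .
\]
These already give incomparability. If \(\bm{a_0}\le\bm{a_1}\), then \(\bm{b}\le \bm{a_1}\le\bm{a_1}\vee\bm{c}\). We may assume \(\bm{d}\le\bm{b}\), replacing \(\bm{b}\) by \(\bm{b}\vee\bm{d}\) otherwise, or else run the standard Sacks requirements alongside. Then \(\bm{d}\le\bm{a_1}\vee\bm{c}\) follows, which is a contradiction.

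For each \(R_{e,i}\) we use the length of agreement \(\ell_{e,i}(s)\) between \(\Phi^{A_i\oplus C}_{e,s}\) and \(D_s\), and we restrain \(A_i\) (by routing \(B\)-elements into \(A_{1-i}\)) below the use of computations up to \(\ell\). The difficulty is that \(C\)-changes can destroy these computations. Robinson's trick handles this. At a stage \(s\) we ask \(C'\) whether a given \(C\)-computation, or the existence of a \(C\)-correct extension with the relevant use, is \(C\)-true; this is a \(\Sigma_1(C)\) question, so it is an index query to \(C'\). We impose restraint only when the current approximation says "yes". Superlowness is what makes this possible in \(\mathsf{I}\Sigma_1\). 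Since \(C'\le_{wtt}\varnothing'\), we have that \(C'\) is \(\omega\)-c.e.\ with a computable bound \(g\) on mind changes. So each query changes its answer a bounded, effectively known number of times. This replaces the ordinary "finitely many wrong guesses" of Robinson with a \(\Delta_1\)-bounded quantity, and the count can then be handled by \(\Sigma_1\) induction rather than \(\Sigma_2\).

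For the verification we follow Mytilinaios: we group the requirements into blocks (Lemma~\ref{lem:Robinson_block_of_requirements} is presumably the key block lemma). Requirements in the same block are treated with equal priority, and blocks are indexed so that a bounded initial segment of blocks is \(M\)-finite. We then prove by \(\mathsf{I}\Sigma_1\) that each block is injured only boundedly often. Hyperregularity of \(C\) (Lemma~\ref{lem:superlow_hyperregular}) ensures that \(C\)-computable bounds on restraints over an \(M\)-finite block are \(M\)-finite. Without it, the supremum of restraints could be unbounded, and \(\mathsf{B}\Sigma_1\)-style collection would be needed relative to \(C\). Finally we argue the standard way. If \(\Phi_e^{A_i\oplus C}=D\), then the agreement length is unbounded. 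The restraint, which is eventually respected because only boundedly many \(C'\)-guesses and blocks intervene, then yields a computation of \(D\) from \(C\): wait for a \(C\)-confirmed computation. This contradicts \(\bm{d}\nleq\bm{c}\).

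The main obstacle is the block lemma. We must show, using only \(\Sigma_1\) induction, that for every \(n\) the total number of injuries to blocks \(\le n\) is bounded. The plan is to bound this number by a computable function built from \(g\) on the relevant queries, and then apply \(\mathsf{I}\Sigma_1\) to the \(\Sigma_1\) statement "there is a stage after which blocks \(\le n\) are never injured, with injury count \(\le h(n)\)". Hyperregularity is used to keep the relevant query indices inside an \(M\)-finite set.
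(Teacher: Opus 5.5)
Your ingredients are the paper's: splitting the enumeration of \(B\), requirements \(\Phi_e^{A_i\oplus C}\neq D\), Robinson's trick made effective by the \(\omega\)-c.e.\ approximation of \(C'\), blocks, and hyperregularity of \(C\). But the verification, which you rightly call the crux, rests on a step that fails. You want a computable \(h(n)\), built from the mind-change bound \(g\), that bounds the number of injuries to blocks \(\le n\). No such \(h\) exists. Apart from wrong Robinson guesses, a block is injured whenever an element of \(B\) enters below the restraint of a higher block. That restraint grows each time a higher requirement acts, and how often this happens depends on \(D\), \(\Phi_e\) and \(B\) with no computable bound. Sacks splitting is unbounded-type, which is why blocking is needed at all; \(g\) controls only the guesses. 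Without \(h\), ``there is a stage after which blocks \(\le n\) are never injured'' is \(\Sigma_2\), not \(\Sigma_1\), and inducting on it along a static list of blocks needs \(\mathsf{I}\Sigma_2\). The paper never inducts along blocks. Under the dynamic priority assignments of Section~\ref{sec:SST_dynamic_assignments}, an initialized block absorbs lower-priority requirements, so \(\lambda_s(P_e)\) is nonincreasing and has a limit \(i\) by \(\mathsf{L}\Sigma_1\) (Lemma~\ref{lem:final_block_order_exists}). Once \(P_e\) reaches \(\Lambda(i)\), the block \(\Lambda(i-1)\) is never initialized again, and two applications of Lemma~\ref{lem:block_not_initialized} show that \(\Lambda(i)\) eventually is not initialized either.

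The block lemma itself (a block no longer initialized eventually stops acting, so its restraint is bounded) also needs an idea you do not supply. It asks for a simultaneous bound on the last action stages of all \(P_e\in\Lambda\), and ``hyperregularity makes \(C\)-computable bounds \(M\)-finite'' does not cover them all. If \(\Delta_e^C(a_e)\uparrow\), the stage after which the guess at \(a_e\) stays \(0\) is defined only by a \(\Sigma_2\) condition. In Lemma~\ref{lem:Robinson_block_of_requirements} the paper proceeds in order:
\begin{enumerate}
\item it uses hyperregularity to dispose of the diagonalizing requirements (\(f\), with \(\dom f\) \(M\)-finite by \(\mathsf{I}\Sigma_1^C\));
\item it uses \(\bm d\nleq\bm c\) to bound \(\bigcup_{e\in\Lambda_1}\dom\Delta_e^C\), and then bounds the definition stages (\(g\));
\item it isolates the point \(a_e\) at which each remaining \(P_e\) can still act;
\item only then does it use \(\omega\)-c.e.-ness: the stage of the \(k\)-th change of \(p(j_e,\cdot)\) is a partial computable function on the bounded set \(\Lambda_2\times\max_e q(j_e)\), so Lemma~\ref{lem:Friedman} bounds it.
\end{enumerate}
This works because there is one \(\Sigma_1^C\) question per \((e,x)\): whether \(C\) extends some string of a c.e.\ guessing set \(W_j\), with index from the recursion theorem, refreshed only on injury or initialization. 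Your description leaves this unspecified, and a separate query for each computation would give no single index whose mind-change bound controls all later actions at \(a_e\).

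Finally, replacing \(\bm b\) by \(\bm b\vee\bm d\) changes the join you must produce. The \(D\)-requirements yield incomparability only when \(\bm d\le\bm b\vee\bm c\) (e.g.\ \(\bm d=\bm b\), as in the corollary). Extra Sacks requirements \(B\nleq_T A_i\) need \(\bm b\neq\bm 0\), and for \(\bm b=\bm 0\) no splitting is incomparable.
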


Taking \(\bm{d}=\bm{b}>\bm{c}\) recovers Robinson's Splitting Theorem.

\begin{corollary}
    The following holds in models of \(\mathsf{P}^-+\mathsf{I}\Sigma_1\):
    Let \(\bm{b}>\bm{c}\) be c.e.\ degrees with \(\bm{c}\) superlow.
    Then there exist incomparable c.e.\ degrees \(\bm{a_0},\bm{a_1}\) such that \(\bm{b}= \bm{a_0}\vee \bm{a_1}\) with \(\bm{a_i}>\bm{c}\) for \(i<2\).
\end{corollary}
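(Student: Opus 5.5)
The corollary should follow directly from Theorem~\ref{thm:main} by taking \(\bm{d}=\bm{b}\). Fix c.e.\ degrees \(\bm{b}>\bm{c}\) in a model of \(\mathsf{P}^-+\mathsf{I}\Sigma_1\), with \(\bm{c}\) superlow. Since \(\bm{c}<\bm{b}\), we have \(\bm{b}\nleq\bm{c}\), so the hypotheses of Theorem~\ref{thm:main} hold with \(\bm{d}=\bm{b}\). The theorem then yields incomparable c.e.\ degrees \(\bm{a_0},\bm{a_1}\) with \(\bm{b}=\bm{a_0}\vee\bm{a_1}\) and \(\bm{b}\nleq\bm{a_i}\vee\bm{c}\) for \(i<2\).

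It remains to turn these into the conclusions \(\bm{a_i}>\bm{c}\); all the work lies in Theorem~\ref{thm:main}, and here we only manipulate degree order.

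\emph{First, \(\bm{c}\nleq\bm{a_i}\).} Suppose \(\bm{c}\le\bm{a_i}\). Then \(\bm{a_i}\vee\bm{c}=\bm{a_i}\le\bm{a_0}\vee\bm{a_1}=\bm{b}\). Also \(\bm{b}\nleq\bm{a_i}\vee\bm{c}=\bm{a_i}\). This alone is not yet a contradiction. However, the sets \(A_i\) produced in the proof of Theorem~\ref{thm:main} are not merely some splitting. The construction is run relative to the fixed c.e.\ set \(C\), and in Robinson's original argument one arranges \(C\le_T A_i\) by enumerating \(C\) into both halves. In the standard form of Robinson's proof this is done by replacing \(B\) with \(B\oplus C\), which has the same degree \(\bm{b}\) since \(\bm{c}\le\bm{b}\).

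\emph{The joining step.} So I would argue as follows. Apply Theorem~\ref{thm:main} to \(\bm{b},\bm{c},\bm{d}=\bm{b}\) to get \(\bm{a_0},\bm{a_1}\), and then set \(\bm{a_i}'=\bm{a_i}\vee\bm{c}\). Each \(\bm{a_i}'\) is c.e., since joins of c.e.\ degrees are c.e. The following checks are needed:
\begin{itemize}
\item \(\bm{a_0}'\vee\bm{a_1}'=\bm{a_0}\vee\bm{a_1}\vee\bm{c}=\bm{b}\vee\bm{c}=\bm{b}\), using \(\bm{c}\le\bm{b}\).
\item \(\bm{c}\le\bm{a_i}'\).
\item \(\bm{c}\ne\bm{a_i}'\). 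Otherwise \(\bm{a_i}\le\bm{c}\), so \(\bm{b}=\bm{a_{1-i}}\vee\bm{a_i}\le\bm{a_{1-i}}\vee\bm{c}\), contradicting \(\bm{b}\nleq\bm{a_{1-i}}\vee\bm{c}\). Hence \(\bm{a_i}'>\bm{c}\).
\item \(\bm{a_0}'\) and \(\bm{a_1}'\) are incomparable. If \(\bm{a_0}'\le\bm{a_1}'\), then \(\bm{b}=\bm{a_0}'\vee\bm{a_1}'=\bm{a_1}'=\bm{a_1}\vee\bm{c}\), contradicting \(\bm{b}\nleq\bm{a_1}\vee\bm{c}\). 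The other direction is symmetric.
\end{itemize}

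The only point requiring care is the last check: incomparability of the new degrees must come from the non-cupping clause \(\bm{d}\nleq\bm{a_i}\vee\bm{c}\), not from the incomparability of the original \(\bm{a_i}\). No real obstacle remains. All reasoning is degree-theoretic and uses only that \(\le_T\) is a preorder with joins on c.e.\ sets, which holds in \(\mathsf{P}^-+\mathsf{I}\Sigma_1\).
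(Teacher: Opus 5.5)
Your argument is correct and is essentially the paper's: the paper only says that taking \(\bm{d}=\bm{b}\) in Theorem~\ref{thm:main} recovers the result. Your passage to \(\bm{a_i}\vee\bm{c}\), with strictness and incomparability derived from the clause \(\bm{b}\nleq\bm{a_{1-i}}\vee\bm{c}\), correctly supplies the step the paper leaves implicit, since the \(\bm{a_i}\) given by the theorem need not lie above \(\bm{c}\). The paragraph headed ``\(\bm{c}\nleq\bm{a_i}\)'' states the wrong target and appeals to the internals of the construction; it is an abandoned detour and should simply be deleted.
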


In Section~\ref{sec:preliminary}, we recall the basic setting of reverse recursion theory and the inductive hierarchy;
for further background, see Chong, Li, and Yang~\cite{RN100}.
In Section~\ref{sec:sacks}, we present a proof of the Sacks Splitting Theorem in models of \(\mathsf{P}^-+\mathsf{I}\Sigma_1\).
The reader may also consult the original paper of Mytilinaios~\cite{RN82}; our presentation differs slightly from the original one.
Building on this framework, we make the necessary modifications in Section~\ref{sec:robinson} to prove our main result, Theorem~\ref{thm:main}.
More precisely, the ideas of blocking and dynamic priority assignments remain the same.
The main difficulty lies in the proof of Lemma~\ref{lem:Robinson_block_of_requirements}, which is the analogue of Lemma~\ref{lem:block_of_requirements}.

\section{Preliminaries}\label{sec:preliminary}
We work over the base theory \(\mathsf{P}^-+\mathsf{I}\Sigma_1\).
Here \(\mathsf{P}^-\) denotes the axioms of \(\mathsf{PA}\) with the induction scheme omitted.
Also, \(\mathsf{I}\Sigma_n\) (resp. \(\mathsf{I}\Pi_n\)) denotes the induction scheme for \(\Sigma_n\) (resp. \(\Pi_n\)) formulas, and \(\mathsf{B}\Sigma_n\) (resp. \(\mathsf{B}\Pi_n\)) denotes the collection scheme for \(\Sigma_n\) (resp. \(\Pi_n\)) formulas.
\(\mathsf{L}\Sigma_1\) (resp. \(\mathsf{L}\Pi_1\)) denote that every nonempty \(\Sigma_1\) (resp. \(\Pi_1\)) subset has a least element.
Finally, \(\mathrm{Exp}\) denotes the axiom \(\forall x \exists y\,(y=\exp(x))\), where \(\exp:x\mapsto 2^x\) is the exponential function.

We adopt the standard pairing function, where the code of the pair \((a,b)\) under this function is denoted by \(\langle a, b\rangle\).
Let \(M\) be a model of \(\mathsf{P}^-+\mathsf{I}\Sigma_1\).
For any \(c\in M\), we identify \(c\) with a subset of \(M\) by defining \(x\in c\) to mean that the \(x\)-th digit of the binary expansion of \(c\) is \(1\).
We say a subset \(A\) of \(M\) is \emph{\(M\)-finite} if it is coded by some \(c\in M\).
In particular, \(A\) is bounded in \(M\).

It is well known that:
\begin{theorem}\label{thm:inductive_hierarchy}
    (Paris and Kirby~\cite{RN319}) Work in \(\mathsf{P}^-+\mathsf{I}\Sigma_0\).
    For all \(n \geq 1\),
    \begin{enumerate}
        \item \(\mathsf{I}\Sigma_1\iff \mathsf{I}\Pi_1 \iff \mathsf{L}\Sigma_1 \iff \mathsf{L}\Pi_1\).
        
        \item \(\mathsf{B}\Sigma_{n+1} \Leftrightarrow \mathsf{B}\Pi_n\).
        
        \item \(\mathsf{I}\Sigma_{n+1} \Rightarrow \mathsf{B}\Sigma_{n+1} \Rightarrow \mathsf{I}\Sigma_n\), and the implications are strict.
    \end{enumerate}
    (Slaman~\cite{RN317}) Work in \(\mathsf{P}^-+\mathsf{I}\Sigma_0+\mathrm{Exp}\).
    For all \(n\ge 1\),
    \begin{enumerate}[resume]
        \item \(\mathsf{B}\Sigma_n\Leftrightarrow \mathsf{I}\Delta_n\).
    \end{enumerate}
\end{theorem}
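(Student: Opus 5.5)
The statement is Theorem~\ref{thm:inductive_hierarchy}, a collection of classical facts due to Paris--Kirby and Slaman. The plan is to follow the standard proofs, working throughout in \(\mathsf{P}^-+\mathsf{I}\Sigma_0\), plus \(\mathrm{Exp}\) for item (4).

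For (1), I would argue in four steps.
\begin{enumerate}
\item \(\mathsf{I}\Sigma_1\Rightarrow\mathsf{I}\Pi_1\). Given a \(\Pi_1\) formula \(\varphi(x)\) with \(\varphi(0)\), \(\forall x(\varphi(x)\to\varphi(x+1))\) and \(\neg\varphi(a)\), apply \(\Sigma_1\) induction on \(y\le a\) to \(\psi(y):=\neg\varphi(a-y)\). This gives \(\neg\varphi(0)\), a contradiction.
\item \(\mathsf{I}\Pi_1\Rightarrow\mathsf{I}\Sigma_1\). This is symmetric.
\item \(\mathsf{L}\Sigma_1\iff\mathsf{I}\Pi_1\). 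A failure of \(\Pi_1\) induction gives a nonempty \(\Sigma_1\) set \(\{x:\neg\varphi(x)\}\) with no least element. Conversely, if a \(\Sigma_1\) set \(S\ni a\) has no least element, apply \(\Pi_1\) induction to \(\forall z\le x\,(z\notin S)\).
\item \(\mathsf{L}\Pi_1\iff\mathsf{I}\Sigma_1\). The same argument applies with the roles of \(\Sigma_1\) and \(\Pi_1\) exchanged.
\end{enumerate}

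For (2), note that \(\mathsf{B}\Sigma_{n+1}\) implies \(\mathsf{B}\Pi_n\) trivially. Conversely, given \(\forall x<a\,\exists y\,\exists w\,\theta(x,y,w)\) with \(\theta\in\Pi_n\), apply \(\mathsf{B}\Pi_n\) to the pair \(\langle y,w\rangle\) to obtain a bound, which also bounds \(y\). This uses the fact that \(\Pi_n\) is closed under bounded quantification modulo \(\mathsf{B}\Pi_n\).

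For (3), I would argue as follows.
\begin{itemize}
\item \(\mathsf{I}\Sigma_{n+1}\Rightarrow\mathsf{B}\Sigma_{n+1}\). Given \(\forall x<a\,\exists y\,\theta\), do induction on \(u\le a\) for the \(\Sigma_{n+1}\) statement \(\exists b\,\forall x<u\,\exists y<b\,\theta\). Keeping this \(\Sigma_{n+1}\) requires the usual induction on \(n\) together with (2).
\item \(\mathsf{B}\Sigma_{n+1}\Rightarrow\mathsf{I}\Sigma_n\). Suppose \(\exists y\,\theta(x,y)\) with \(\theta\in\Pi_{n-1}\) fails induction at \(a\). Apply collection to the \(\Pi_n\) formula saying "either \(\varphi(x)\) is witnessed by some \(y\), or \(x\) is large". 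This yields a bound \(b\), so induction reduces to a formula of lower complexity with \(y<b\). For \(n=1\) this is a \(\Delta_0\) formula, so \(\mathsf{I}\Sigma_0\) finishes; in general one induces on \(n\).
\item Strictness. Take a nonstandard model of \(\mathsf{I}\Sigma_n\) and form a \(\Sigma_{n+1}\)-elementary-type initial segment (a Paris--Kirby cut or a recursively saturated construction) that satisfies \(\mathsf{B}\Sigma_{n+1}\) but not \(\mathsf{I}\Sigma_{n+1}\). Similarly, a \(\Sigma_n\)-definable-closure (Skolem hull) model of \(\mathsf{I}\Sigma_n\) fails \(\mathsf{B}\Sigma_{n+1}\).
\end{itemize}
The strictness part is the main obstacle. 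I would simply cite Paris--Kirby~\cite{RN319} for these model constructions.

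For (4), \(\mathsf{B}\Sigma_n\Rightarrow\mathsf{I}\Delta_n\) is the genuinely hard direction and is Slaman's theorem~\cite{RN317}. The idea is to use \(\mathrm{Exp}\) to code the bounded \(\Delta_n\) set \(\{x<a:\varphi(x)\}\) as an element. Using \(\mathsf{B}\Sigma_n\) and the \(\Sigma_n\)/\(\Pi_n\) definitions, one shows that the least-number principle for \(\Delta_n\) holds by a counting argument on the finite approximations to the set. The converse, \(\mathsf{I}\Delta_n\Rightarrow\mathsf{B}\Sigma_n\), is easier: from a counterexample to collection, construct a \(\Delta_n\) cut.

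Since all of these results are classical background rather than contributions of the present paper, in the paper itself I would not reprove them and would just cite~\cite{RN319,RN317}.
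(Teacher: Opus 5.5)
The paper gives no proof of this theorem. It is quoted as background with citations to Paris--Kirby and Slaman, so your decision to cite rather than reprove matches the paper. Your sketch of item (4), however, has the two directions reversed, and the one-line argument you give for the direction you call easier does not go through.

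The hard direction, which is Slaman's theorem and the one that needs \(\mathrm{Exp}\), is \(\mathsf{I}\Delta_n\Rightarrow\mathsf{B}\Sigma_n\). Suppose \(\forall x<a\,\exists y\,\theta(x,y)\) with \(\theta\in\Pi_{n-1}\) has no bound. The natural cut \(\{k\le a:\exists b\,\forall x<k\,\exists y<b\,\theta(x,y)\}\) is \(\Sigma_n\), contains \(0\), is closed under successor, and omits \(a\). But you have no \(\Pi_n\) definition of it, so it refutes \(\mathsf{I}\Sigma_n\), not \(\mathsf{I}\Delta_n\). Producing a formula that is provably both \(\Sigma_n\) and \(\Pi_n\) and still violates induction is the whole content of Slaman's paper. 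The ingredients you list (coding subsets of \(a\) by numbers below \(2^a\) via \(\mathrm{Exp}\), and a counting argument) belong to this direction. Note also that \(\mathrm{Exp}\) only matters for \(n=1\): \(\mathsf{I}\Delta_n\) implies \(\mathsf{I}\Sigma_{n-1}\), which yields \(\mathrm{Exp}\) once \(n\ge 2\).

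Conversely, \(\mathsf{B}\Sigma_n\Rightarrow\mathsf{I}\Delta_n\) is the classical easy direction and needs neither \(\mathrm{Exp}\) nor coding. It is the same bounding trick you already used for \(\mathsf{B}\Sigma_{n+1}\Rightarrow\mathsf{I}\Sigma_n\).
\begin{enumerate}
\item Suppose \(\varphi(x)\) is equivalent both to \(\exists y\,\theta_\sigma(x,y)\) and to \(\forall y\,\theta_\pi(x,y)\), with \(\theta_\sigma\in\Pi_{n-1}\) and \(\theta_\pi\in\Sigma_{n-1}\).
\item Apply \(\mathsf{B}\Pi_{n-1}\) to \(\forall x\le a\,\exists y\,(\theta_\sigma(x,y)\vee\neg\theta_\pi(x,y))\). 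This gives a \(b\) with \(\varphi(x)\leftrightarrow\exists y<b\,\theta_\sigma(x,y)\) for all \(x\le a\).
\item Finish with \(\mathsf{I}\Sigma_{n-1}\), using \(\mathsf{B}\Sigma_{n-1}\) to treat \(\exists y<b\,\theta_\sigma\) as \(\Pi_{n-1}\). When \(n=1\) this is just a \(\Delta_0\) formula.
\end{enumerate}

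There are also two smaller slips.
\begin{itemize}
\item In (3), your separating models are off by one. The model of \(\mathsf{I}\Sigma_n+\neg\mathsf{B}\Sigma_{n+1}\) is the hull of \(\Sigma_{n+1}\)-definable elements; the \(\Sigma_n\)-definable elements only satisfy \(\mathsf{I}\Sigma_{n-1}\) and already fail \(\mathsf{B}\Sigma_n\). The Paris--Kirby model of \(\mathsf{B}\Sigma_{n+1}+\neg\mathsf{I}\Sigma_{n+1}\) is a proper initial segment that is only \(\Sigma_n\)-elementary. A proper \(\Sigma_{n+1}\)-elementary initial segment of a model of \(\mathsf{PA}\) satisfies \(\mathsf{B}\Sigma_{n+2}\), and hence \(\mathsf{I}\Sigma_{n+1}\).
\item In (1), \(\mathsf{I}\Sigma_1\Rightarrow\mathsf{L}\Pi_1\) is not literally the mirror image of the other case. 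There \(\forall z\le x\,(z\notin S)\) is a bounded universal quantifier over a \(\Sigma_1\) formula. To keep it \(\Sigma_1\) you need \(\mathsf{B}\Sigma_1\), or you must carry a bound on the witnesses inside the induction formula.
\end{itemize}
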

In Section~\ref{sec:discussion}, we will see that \(\mathsf{B}\Sigma_2\) proves the original Robinson Splitting Theorem.
Since \(\mathsf{B}\Sigma_2\) is strictly stronger than \(\mathsf{I}\Sigma_1\), the proof of Theorem~\ref{thm:main} requires additional care.
Although we do not assume \(\mathsf{B}\Sigma_2\), the following lemmas are still available and make the proof possible.

The first lemma is a well-known result.
\begin{lemma}[H. Friedman]\label{lem:Friedman}
    For every model \(M\) of \(\mathsf{P}^-+\mathsf{I}\Sigma_1\), every partial computable function maps bounded subsets of \(M\) to bounded subsets of \(M\).
\end{lemma}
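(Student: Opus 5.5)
The plan is to apply \(\Sigma_1\) collection, which is available because \(\mathsf{I}\Sigma_1\) implies \(\mathsf{B}\Sigma_1\) by Theorem~\ref{thm:inductive_hierarchy}. Let \(\varphi_e\) be a partial computable function and let \(X\subseteq M\) be bounded, say \(X\subseteq [0,a)\). Bounding the set \(\varphi_e[X]=\{\varphi_e(x)\mid x\in X,\ \varphi_e(x)\downarrow\}\) directly would need something like \(\forall x<a\,\exists s\,(\varphi_{e,s}(x)\downarrow)\). That statement can fail, because some \(x\) may diverge. So I will not assume convergence on all of \([0,a)\). Instead I will first isolate the set of convergent arguments below \(a\) as an \(M\)-finite set and then apply collection to it.

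The first step is to code the domain below \(a\). The set \(D=\{x<a\mid \exists s\,\varphi_{e,s}(x)\downarrow\}\) is \(\Sigma_1\)-definable and bounded. In \(\mathsf{I}\Sigma_1\), every bounded \(\Sigma_1\) set is \(M\)-finite. To see this, consider the \(\Sigma_1\) formula \(\psi(n)\) saying that there is a code \(c\) of an \(n\)-element subset of \([0,a)\) all of whose members are in \(D\), as witnessed by some stage \(s\). The set of such \(n\) is nonempty (it contains \(n=0\)) and is bounded by \(a\). Using \(\mathsf{L}\Pi_1\), or equivalently \(\Sigma_1\)-induction in the form that a bounded \(\Sigma_1\) set of numbers has a maximum, take the largest such \(n\) and a witnessing code \(c\) with stage \(s\). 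Then \(c=D\): any \(x\in D\setminus c\) would converge at some stage, and adding it to \(c\) would contradict the maximality of \(n\). Existence of the maximum follows from \(\mathsf{I}\Sigma_1\) applied to the formula \(\neg\psi(a-m)\) in the standard way, or directly from \(\mathsf{L}\Pi_1\) of Theorem~\ref{thm:inductive_hierarchy}(1).

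Given the code \(c\) and the common stage \(s\) produced above, every \(x\in c\) satisfies \(\varphi_{e,s}(x)\downarrow\). The image \(\varphi_e[X]\) is then contained in \(\{\varphi_{e,s}(x)\mid x<a,\ x\in c\}\). This is a \(\Delta_0\)-definable set of values indexed over the bounded range \(x<a\), and it is bounded by \(\mathsf{I}\Delta_0\) applied to the maximum over \(x<a\) of these values, which is computed by a bounded search. If one prefers not to rely on the stage being built into \(\psi\), apply \(\mathsf{B}\Sigma_1\) to \(\forall x\in c\,\exists s\,\varphi_{e,s}(x)\downarrow\) to obtain a uniform stage. Either way we get a bound \(b\) with \(\varphi_e[X]\subseteq[0,b)\).

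The main obstacle is the first step, namely showing that the bounded \(\Sigma_1\) set \(D\) is \(M\)-finite, i.e., obtaining the maximal \(n\). This is exactly where \(\mathsf{I}\Sigma_1\), rather than merely \(\mathsf{B}\Sigma_1\), is used. I will carry out this step carefully via \(\mathsf{L}\Pi_1\). The rest of the argument is routine bookkeeping.
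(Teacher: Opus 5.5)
Your proof is correct; the paper gives no proof of this lemma and cites it as a well-known result of H. Friedman, and your argument is the standard one. Using \(\mathsf{L}\Sigma_1\) (e.g.\ the least \(m\le a\) with \(\psi(a-m)\)) to maximize the number of inputs below \(a\) that converge by a common stage both codes the bounded \(\Sigma_1\) domain and supplies a uniform stage \(s\). So the fallback to \(\mathsf{B}\Sigma_1\) is unnecessary, and under the usual convention that \(\varphi_{e,s}(x)=y\) implies \(y<s\), the image is simply bounded by \(s\).
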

We will also need an analogue of Lemma~\ref{lem:Friedman} for oracle computations.
\begin{definition}\label{def:hyperregular}
    A set \(A\) is \emph{hyperregular} if, for every model \(M\) of \(\mathsf{P}^{-} + \mathsf{I}\Sigma_0 + \mathrm{Exp}\),
    every partial \(A\)-computable function maps a bounded subset of \(M\) to a bounded subset of \(M\).
\end{definition}

Hyperregular sets can be characterized in several ways.
\begin{lemma}[\cite{RN384}]\label{lem:hyperregular}
    Let \(M\) be a model of \(\mathsf{P}^- + \mathsf{I}\Sigma_0 + \mathrm{Exp}\), and let \(A \subseteq M\) be regular.
    Then the following are equivalent:
    \begin{enumerate}
        \item \(A\) is hyperregular.
        \item \(\mathsf{I}\Sigma_1^A\) holds; that is, \(\mathsf{I}\Sigma_1\) relative to \(A\) holds.
        \item For every \(e\), \(W_e^A\) is regular.
    \end{enumerate}
\end{lemma}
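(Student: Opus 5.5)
The plan is to prove the cycle \((1)\Rightarrow(3)\Rightarrow(2)\Rightarrow(1)\). Throughout, \(M\models\mathsf{P}^-+\mathsf{I}\Sigma_0+\mathrm{Exp}\) and \(A\subseteq M\) is regular, that is, every initial segment \(A\res b\) is \(M\)-finite. The role of regularity is that, once a bound \(b\) on all relevant uses is known, every \(A\)-computation below \(b\) becomes a \(\Delta_0\) property of the code of \(A\res b\). Combined with \(\mathsf{I}\Sigma_0+\mathrm{Exp}\), which gives \(\Delta_0\)-bounded comprehension relative to a parameter, this yields \(M\)-finiteness of the sets we need.

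\emph{\((1)\Rightarrow(3)\).} Fix \(e\) and a bound \(a\).
\begin{enumerate}
\item Define the partial \(A\)-computable function \(g\) that, on input \(x\in W_e^A\), searches for a stage \(s\) and an oracle segment \(\sigma\prec A\) such that \(x\in W_{e,s}^{\sigma}\). It outputs \(\langle s,\abs{\sigma}\rangle\).
\item The domain of \(g\) restricted to \([0,a)\) is \(W_e^A\cap[0,a)\), which is bounded. By hyperregularity, \(g\) maps it onto a bounded set, say below \(b\).
\item Then
\[
W_e^A\cap[0,a)=\{x<a : \exists s<b\ \exists \ell<b\,(x\in W_{e,s}^{A\res \ell})\}.
\]
This is \(\Delta_0\) in the parameter coding \(A\res b\), which exists by regularity. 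Hence it is \(M\)-finite by \(\Delta_0\)-comprehension, so \(W_e^A\) is regular.
\end{enumerate}

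\emph{\((3)\Rightarrow(2)\).} Let \(\varphi(x)\) be \(\Sigma_1^A\), possibly with parameters, and suppose \(\varphi(0)\) holds and \(\forall x\,(\varphi(x)\to\varphi(x+1))\), but \(\neg\varphi(a)\) for some \(a\).
\begin{enumerate}
\item The set \(\{x\le a:\varphi(x)\}\) is of the form \(W_e^A\cap[0,a]\), with parameters absorbed into the index. By (3) it is coded by some \(c\in M\).
\item The induction hypothesis now becomes a \(\Delta_0\) statement about \(c\). Applying \(\mathsf{I}\Delta_0\) to ``\(x\in c\)'' gives \(a\in c\), a contradiction.
\end{enumerate}
The same argument yields the least-number principle relative to \(A\), if that form is preferred.

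\emph{\((2)\Rightarrow(1)\).} Assume \(\mathsf{I}\Sigma_1^A\). Then, by the relativized Paris--Kirby results (Theorem~\ref{thm:inductive_hierarchy}), \(\mathsf{B}\Sigma_1^A\) holds, as does bounded \(\Sigma_1^A\)-comprehension: every \(\Sigma_1^A\)-definable subset of \([0,a)\) is \(M\)-finite. The latter is proved in the usual way, by \(\Sigma_1^A\)-induction, or \(\mathsf{L}\Pi_1^A\), on the number of elements.

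Now let \(f\) be partial \(A\)-computable and \(X\subseteq\dom f\) with \(X\subseteq[0,a)\).
\begin{enumerate}
\item \(\dom f\cap[0,a)\) is \(\Sigma_1^A\), hence coded by some \(c\).
\item Since \(\forall x\in c\,\exists s\,(f_s^A(x)\downarrow)\), \(\mathsf{B}\Sigma_1^A\) gives a single bound \(s_0\) on the convergence stages.
\item The outputs are then bounded by \(s_0\), so \(f[X]\subseteq f[c]\) is bounded.
\end{enumerate}

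I expect the main care to be needed in \((1)\Rightarrow(3)\). There one must make sure that the bound produced by hyperregularity also bounds the oracle use, and not only the stage, so that regularity can turn the computation into a \(\Delta_0\) condition on a single code. Packaging use and stage into the output of \(g\), as above, is exactly what handles this. A secondary point is checking that bounded \(\Sigma_1^A\)-comprehension really follows from \(\mathsf{I}\Sigma_1^A\) over this weak base with \(\mathrm{Exp}\). This is the relativization of the standard argument, and regularity of \(A\) is used there to keep oracle queries coded.
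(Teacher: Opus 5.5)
The paper gives no proof of this lemma; it is quoted from the literature. So there is no in-paper argument to compare against. Your cycle \((1)\Rightarrow(3)\Rightarrow(2)\Rightarrow(1)\) is the standard argument, and it is correct.

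One point deserves to be made explicit. In \((1)\Rightarrow(3)\) you apply hyperregularity to \(W_e^A\cap[0,a)\), a bounded set not yet known to be \(M\)-finite. The paper's definition permits this, since it speaks of arbitrary bounded subsets. The paper reads its definitions the same way when it applies Lemma~\ref{lem:Friedman}, and later hyperregularity of \(C\), to partial functions whose domain is merely bounded.

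This reading is not optional. If one only demanded that \(f[K]\) be bounded for \(M\)-finite \(K\subseteq\dom f\), condition (1) would be equivalent to \(\mathsf{B}\Sigma_1^A\). The lemma would then fail already for \(A=\varnothing\) in a model of \(\mathsf{I}\Sigma_0+\mathrm{Exp}+\mathsf{B}\Sigma_1+\neg\mathsf{I}\Sigma_1\).

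Two minor remarks:
\begin{enumerate}
\item Under the usual convention that a stage-\(s\) computation has use below \(s\), bounding the stage already bounds the use. Packaging \(\abs{\sigma}\) into the output of \(g\) is therefore harmless but not needed.
\item Theorem~\ref{thm:inductive_hierarchy} as stated in the paper lists \(\mathsf{I}\Sigma_{n+1}\Rightarrow\mathsf{B}\Sigma_{n+1}\) only for \(n\ge 1\). The step \(\mathsf{I}\Sigma_1^A\Rightarrow\mathsf{B}\Sigma_1^A\) you invoke is the equally standard relativized \(n=0\) case, not literally that theorem.
\end{enumerate}
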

Here, \(A\subseteq M\) is \emph{regular} if, for each \(n\in M\), the restriction \(A \res n := \{x\in A\mid x<n\}\) is \(M\)-finite.
It is known that in models of \(\mathsf{P}^- + \mathsf{I}\Sigma_1\), every c.e.\ set is regular.
In fact, this property also holds for \(\omega\)-c.e.\ sets:

\begin{lemma}\label{lem:omega_c_e_regular}
    Let \(M\) be a model of \(\mathsf{P}^-+\mathsf{I}\Sigma_1\) and \(A\subseteq M\) be \(\omega\)-c.e.\@.
    Then \(A\) is regular.
\end{lemma}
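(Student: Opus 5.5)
We prove regularity directly: fix \(n\in M\) and show that \(A\res n\) is \(M\)-finite. Let \(f,g\) witness that \(A\) is \(\omega\)-c.e., so \(A(x)=\lim_s f(x,s)\), \(f(x,0)=0\), and the number of changes of \(f(x,\cdot)\) is \(<g(x)\). Since \(g\) is total computable, Lemma~\ref{lem:Friedman} (or simply \(\Sigma_1\) collection, which follows from \(\mathsf{I}\Sigma_1\)) shows that \(g\) is bounded on \([0,n)\) by some \(m\in M\). The aim is to find a stage \(s\) by which every \(x<n\) has stopped changing. Then \(A\res n=\{x<n\mid f(x,s)=1\}\), which is \(\Delta_0\)-definable from parameters and hence \(M\)-finite by bounded \(\Sigma_0\)-comprehension available in \(\mathsf{I}\Sigma_1\).

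To find \(s\), we cannot just take the maximum over \(x<n\) of the settling stage of \(x\), because ``\(x\) has settled by \(s\)'' is \(\Pi_1\). Instead we count total changes below \(n\). For each stage \(s\), let
\[c(s)=\sum_{x<n}\abs{\{t<s\mid f(x,t+1)\neq f(x,t)\}}.\]
This is computable in \(s\), nondecreasing, and bounded by \(\sum_{x<n}g(x)\le n\cdot m\). Consider the \(\Sigma_1\) set of values \(\{k\mid \exists s\, c(s)\ge k\}\). It is a bounded \(\Sigma_1\) set, so by \(\mathsf{I}\Sigma_1\) (via \(\mathsf{L}\Pi_1\) applied to its complement within \([0,nm+1]\), Theorem~\ref{thm:inductive_hierarchy}) it has a maximum element \(k_0\). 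Choose \(s\) with \(c(s)=k_0\). Then for all \(t\ge s\) we have \(c(t)=k_0\), so no \(x<n\) changes after stage \(s\), and \(f(x,s)=A(x)\) for all \(x<n\).

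The only delicate step is extracting the maximum of a bounded \(\Sigma_1\) set. This is exactly where \(\mathsf{I}\Sigma_1\) rather than \(\mathsf{B}\Sigma_1\) is needed. It is standard: the \(\Pi_1\) set \(\{k\le nm+1\mid \forall s\, c(s)<k\}\) is nonempty, since it contains \(nm+1\), so it has a least element \(k_1\), and \(k_1\ge 1\) because \(c(0)\ge 0\). Then \(k_0=k_1-1\) satisfies \(\exists s\,c(s)\ge k_0\). Everything else is routine.
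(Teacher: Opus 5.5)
Your proof is correct, but it takes a different route from the paper's.

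The paper's route goes through a set. It forms the change set $C$, putting $\langle x,i-1\rangle$ into $C$ at the $i$-th change of $f(x,\cdot)$. It then invokes the standard fact that c.e.\ sets are regular in $\mathsf{P}^-+\mathsf{I}\Sigma_1$, obtaining a code $t$ for $C$ below $\langle n,d\rangle$ with $d=\max_{x<n}g(x)$. Finally it reads off $A(x)$ for $x<n$ from the parity of the number of changes recorded in $t$. It never finds a stage at which $A\res n$ has settled.

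Your route proves the needed instance of bounded $\Sigma_1$ comprehension by hand, and for a single number rather than a whole set. The total change count $c(s)$ below $n$ is bounded, so by $\mathsf{L}\Pi_1$ it attains its maximum at some stage $s$. That stage is a uniform settling point, which gives you the explicit description $A\res n=\{x<n\mid f(x,s)=1\}$.

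Both arguments rest on the same $\mathsf{I}\Sigma_1$ principle: bounded $\Sigma_1$ sets are $M$-finite, or equivalently have maxima.
\begin{itemize}
\item The paper's version is more modular, since it delegates everything to a known black box.
\item Yours is self-contained and yields a stronger conclusion, namely a modulus stage for $A\res n$.
\item Yours also avoids the parity decoding. With the paper's indexing, $x\in A$ exactly when the largest recorded index $i$ is even, because then $i+1$ changes have occurred.
\end{itemize}

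One small imprecision: $\{x<n\mid f(x,s)=1\}$ is $\Delta_1$ rather than $\Delta_0$, since $f$ is only computable. It is still $M$-finite, by bounded $\Sigma_1$ comprehension in $\mathsf{I}\Sigma_1$, or simply by computing the finite table of $f(\cdot,s)$ on $[0,n)$.
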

\begin{proof}
    Suppose \(A\) is \(\omega\)-c.e.\@. Let \(f\) and \(b\) be the computable functions such that \[
        A(x)=\lim_{s\to \infty} f(x,s)
    \]
    where for each \(x\in M\), \(f(x,0)=0\), and \(|\{s\mid f(x,s+1)\neq f(x,s)\}|< b(x)\).

    Let \(C\) be the change set for \(f\), i.e., we enumerate \(\langle x,i-1\rangle\) into \(C\) at stage \(s\) if \(f(x,s+1)\neq f(x,s)\) for the \(i\)-th time.
    So \(C\) is a c.e.\ set and thus regular.
    
    By the assumption of \(A\), \(\langle x,i\rangle \in C\) implies \(i<b(x)\).
    For any \(n\in M\), let \(d=\max\{\, b(x)\mid x<n\}\), whose existence follows from \(\mathsf{I}\Sigma_1\).
    Let \(t\) code \(C\) up to \(\langle n, d\rangle\).
    Now, \(x\in A\) if and only if \(|\{i\mid \langle x,i\rangle \in C\}|\) is odd.
    So for any \(x<n\),
    \[
        x\in A\iff \exists i<d \left( \langle x,i \rangle\in t \land \langle x,i+1\rangle \notin t \land i \text{ is odd}\right)
    \]
    Hence \(A\res n\) is coded in \(M\) for any \(n\in M\), so \(A\) is regular.
\end{proof}

\begin{lemma}\label{lem:superlow_hyperregular}
    If \(C\) is a superlow c.e.\ set, then \(C\) is hyperregular.
\end{lemma}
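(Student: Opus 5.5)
We work inside a fixed model \(M\) of \(\mathsf{P}^-+\mathsf{I}\Sigma_1\) and use the characterization in Lemma~\ref{lem:hyperregular}: it suffices to show that \(C\) is regular and that \(W_e^C\) is regular for every \(e\). Regularity of \(C\) itself is immediate, since \(C\) is c.e.\ and c.e.\ sets are regular in models of \(\mathsf{I}\Sigma_1\). The real work is the third condition of Lemma~\ref{lem:hyperregular}. The plan is to reduce each \(W_e^C\) to the jump \(C'\) and then apply Lemma~\ref{lem:omega_c_e_regular}.

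\textbf{Step 1: \(C'\) is \(\omega\)-c.e.} By superlowness, \(C'\le_{wtt}\varnothing'\). As remarked in the introduction, in models of \(\mathsf{P}^-+\mathsf{I}\Sigma_1\) this makes \(C'\) \(\omega\)-c.e. I would justify this directly. Fix the wtt reduction \(\Phi\) with computable use bound \(u\). Approximate \(C'(x)\) by \(f(x,s)=\Phi^{\varnothing'_s}(x)[s]\), with the value \(0\) until the computation converges. Since \(\varnothing'\res u(x)\) changes at most \(u(x)\) times, the number of mind changes is bounded by a computable function such as \(2u(x)+2\). This count is a \(\Delta_0\)-in-parameters argument about the enumeration of \(\varnothing'\) below \(u(x)\), so it is available in \(\mathsf{I}\Sigma_1\).

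\textbf{Step 2: \(C'\) is regular.} Apply Lemma~\ref{lem:omega_c_e_regular} to \(C'\).

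\textbf{Step 3: \(W_e^C\) is regular for each \(e\).} Fix \(e\) and use the s-m-n theorem, which is fine in \(\mathsf{I}\Sigma_1\), to obtain a computable \(h\) with \(x\in W_e^C\iff h(x)\in C'\). Choosing \(h\) strictly increasing, for instance by padding, the set \(\{h(x) : x<n\}\) is \(M\)-finite and bounded by \(h(n)\). Lemma~\ref{lem:Friedman} also gives boundedness directly. Let \(t\) code \(C'\res h(n)\), which exists by Step 2. Then \(W_e^C\res n=\{x<n : h(x)\in t\}\) is \(\Delta_0\)-definable from parameters, so it is \(M\)-finite by \(\Delta_0\)-comprehension on bounded sets, which holds in \(\mathsf{I}\Sigma_0\). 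Hence \(W_e^C\) is regular, and Lemma~\ref{lem:hyperregular} yields that \(C\) is hyperregular.

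\textbf{Main obstacle.} I expect the delicate point to be Step 1, together with the implicit requirement that \(C'\) be defined correctly in the model. The jump should be taken as \(\{x : \Phi_x^C(x)\downarrow\}\), where convergence is witnessed by \(M\)-finite initial segments of \(C\); this needs \(C\) to be regular, which we already have. The mind-change bound must also be verified without \(\mathsf{B}\Sigma_2\). To handle this, I would bound changes of \(f(x,\cdot)\) by the changes of the finite coded approximation \(\varnothing'_s\res u(x)\), which form a monotone sequence of subsets of \(u(x)\). The number of distinct values is then at most \(u(x)+1\), and this can be proved by \(\Sigma_1\) induction on the stage.
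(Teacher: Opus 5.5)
Your proposal is correct and takes the same route as the paper: superlowness makes \(C'\) \(\omega\)-c.e., hence regular by Lemma~\ref{lem:omega_c_e_regular}; each \(W_e^C\le_1 C'\) is then regular; and Lemma~\ref{lem:hyperregular} gives hyperregularity. The paper leaves implicit the details you add: that \(C\) itself is regular, an explicit mind-change bound for the wtt approximation (after truncating \(\Phi\) to query only below \(u(x)\)), and a bound on the range of the \(1\)-reduction.
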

\begin{proof}
    Since \(C\) is superlow, \(C'\) is \(\omega\)-c.e.\ and therefore regular by Lemma~\ref{lem:omega_c_e_regular}.
    For each \(e\), \(W_e^C\le_1 C'\) implies that \(W_e^C\) is regular. 
    By Lemma~\ref{lem:hyperregular}, \(C\) is hyperregular.
\end{proof}

It was shown in~\cite{RN365,RN320} that, assuming \(\mathsf{B}\Sigma_2\), every incomplete c.e.\ set is hyperregular.
Thus, this issue only arises in models of \(\mathsf{I}\Sigma_1\).

Finally, we introduce some notation:
If \(\sigma\) is an initial segment of a set \(X\), we write \(X\in [\sigma]\).
If \(W\) is a set of finite strings, we write \(X\in [W]\) if \(X\in [\sigma]\) for some \(\sigma\in W\).

\section{Proof of Theorem~\ref{thm:sacks}}\label{sec:sacks}
We will prove a slightly stronger statement.
Let \(M\) be a model of \(\mathsf{P}^-+\mathsf{I}\Sigma_1\).
Given \(B,D\subseteq M\) c.e.\ sets with \(D\) noncomputable, we exhibit c.e.\ sets \(A_0,A_1\subseteq M\) such that \(B=A_0\sqcup A_1\) and \(D\nleq_T A_i\) for \(i<2\).
Taking \(D=B\) gives the original theorem.
Taking \(D<_T B\) yields other interesting corollaries.

We have a global requirement that \(B=A_0\sqcup A_1\), which implies \(B\equiv_T A_0\oplus A_1\).
The other requirements are the following:
\begin{align*}
    P_e: D&=\Phi_e^{A_0}\to D=\Delta_e;\\
    Q_e: D&=\Psi_e^{A_1}\to D=\Gamma_e.
\end{align*}
where \(\Delta_e\) and \(\Gamma_e\) are local functionals that will be built during the construction.
Given that \(D\) is noncomputable, once the requirements are satisfied for all \(e\in M\), we are done.
We remark that the injury set \(I_e=\{s\mid P_e\text{ is injured at }s\}\) is \(\Sigma_1\).
However, the statement that \(I_e\) is finite is \(\Sigma_2\).
The following induction only works over \(\mathsf{I}\Sigma_2\):
\[
    (\forall k<e, I_k\text{ is finite}) \to I_e\text{ is finite}
\]
To resolve this, as in Mytilinaios' paper~\cite{RN382}, there are two ideas:
the first is the blocking method, and the second is to use dynamic priority assignments.
We first review the basic strategy of Sacks Splitting Theorem.

\subsection{A single requirement}\label{sec:SST_single_requirement}
Without loss of generality, we take \(P_e\)-requirements as an example.
First we introduce some standard notation.
At stage~\(s\), if the \(P_e\)-requirement is eligible to act, then \(s\) is called a \(P_e\)-stage.
Let \[
    \ell_s(e)=\max \{ y\mid \forall x\le y, \Phi^{A_{0,s}}_{e,s}(x)\downarrow = D_s(x)\}.
\]
A \(P_e\)-stage is a \emph{\(P_e\)-expansionary stage} if \(\ell_s(e)>\ell_t(e)\) for all \(P_e\)-stages \(t<s\).

The basic idea to satisfy this requirement is the following:
at a \(P_e\)-expansionary stage \(s\), for those \(x\le \ell_s(e)\) where \(\Delta_e(x)\) is not yet defined,
we define \(\Delta_e(x)=D_s(x)\) and call \((\sigma,x,k)\) the \emph{diagonalizing triple},
where \(\sigma=A_{0,s}\res s\) and \(k=\Delta_e(x)\).
Note that \(\Phi_e^\sigma(x)=D_s(x)=\Delta_e(x)=k\).
We also put a restraint on \(A_0\res s\) to prevent points from being enumerated into \(A_0\res s\).

There are two cases for \(\Delta_e(x)\) with diagonalizing triple \((\sigma,x,k)\):
the first is \(\Delta_e(x)=D(x)\).
The second is \(\Delta_e(x)\neq D(x)\).
In the latter case, there exists some stage \(t>s\) such that \(D_t(x)\neq \Delta_e(x) = k\).
By successful restraint on \(A_0\res s\), we have \(A_0\in [\sigma]\) and hence \(\Phi_{e}^{A_0}(x)=k\neq D(x)\).
As long as the restraint is successful, we have \(\Phi_e^{A_0}(x)\neq D(x)\) and we claim the \(P_e\)-requirement \emph{diagonalizes at \(x\)}.

Returning to the satisfaction of the \(P_e\)-requirement.
\begin{enumerate}
    \item If \(\Delta_e(x)\neq D(x)\) for some \(x\in \dom \Delta_e\), then \(P_e\) diagonalizes at \(x\) and therefore is satisfied.
    \item If \(\Delta_e(x) = D(x)\) for all \(x\in \dom \Delta_e\) and \(\dom \Delta_e\neq M\), then there must be only boundedly many \(P_e\)-expansionary stages.
    Therefore, \(P_e\) is satisfied.
    \item If \(\Delta_e(x) = D(x)\) for all \(x\in \dom \Delta_e\) and \(\dom \Delta_e= M\),
    this case does not happen as \(D=\Delta_e\) contradicts the assumption that \(D\) is noncomputable.
\end{enumerate}

In the first two cases, \(\dom \Delta_e\) is bounded, and therefore the restraint put on \(A_0\) is bounded.

We summarize the discussion as the following lemma:
\begin{lemma}\label{lem:single_requirement}
    Suppose there exists some \(s_0\) after which \(P_e\) (\(Q_e\) respectively) is not initialized.
    Then \(P_e\) (\(Q_e\) respectively) is satisfied.
    Moreover, the restraint put on \(A_0\) (\(A_1\) respectively) is bounded.
\end{lemma}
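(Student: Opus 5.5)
We fix \(s_0\) after which \(P_e\) is never initialized, and from then on treat \(P_e\) in isolation. Since \(\Delta_e\) is discarded at initialization, we may take \(\Delta_e\) to be the version built after \(s_0\). We first verify the restraint claim for every \(x\in\dom\Delta_e\) with diagonalizing triple \((\sigma,x,k)\) defined at stage \(s>s_0\). No higher-priority action after \(s_0\) injures \(P_e\), so \(A_0\in[\sigma]\). Hence \(\Phi_e^{A_0}(x)=k\), with the use bounded by \(s\) as in the definition of \(\ell_s(e)\). Consequently, whenever \(D(x)\neq k\) we get \(\Phi_e^{A_0}(x)\neq D(x)\) and \(P_e\) is satisfied. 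This is case (1) of the discussion.

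Now suppose \(\Delta_e(x)=D(x)\) for all \(x\in\dom\Delta_e\). The set \(\dom\Delta_e\) is \(\Sigma_1\) and is an initial segment of \(M\), because at each expansionary stage we define \(\Delta_e\) on all \(x\le\ell_s(e)\). If \(\dom\Delta_e=M\), then \(D=\Delta_e\) is computable: to decide \(x\), search for the stage at which \(\Delta_e(x)\) is defined. This contradicts the hypothesis, so \(\dom\Delta_e\) is a proper \(\Sigma_1\) initial segment. By \(\mathsf{I}\Sigma_1\) it is then bounded, say by \(n\); indeed a \(\Sigma_1\)-definable proper cut cannot exist, by induction on \(y\in\dom\Delta_e\). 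So there are only boundedly many expansionary stages, and \(P_e\) is satisfied in the sense that \(D\neq\Phi_e^{A_0}\). The reason is this: if \(D=\Phi_e^{A_0}\), then, using the regularity of \(A_0\) and \(D\) (c.e.\ sets in \(\mathsf{I}\Sigma_1\)) together with \(\mathsf{B}\Sigma_1\), the computation \(\Phi_e^{A_0}\res(n+1)=D\res(n+1)\) settles at some stage. After that \(\ell_s(e)>n\) at some \(P_e\)-stage, so \(\Delta_e\) gets defined at \(n+1\), which is a contradiction.

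It remains to bound the restraint, which is the step needing care. The restraint is \(\bigcup\{s : \Delta_e(x)\text{ defined at }s,\ x\in\dom\Delta_e\}\). In case (2), \(\dom\Delta_e\subseteq[0,n]\) is bounded, and the stage-defining map is a partial computable function on this bounded set. By Lemma~\ref{lem:Friedman} its image is bounded, so the restraint is bounded. In case (1), we cannot directly bound \(\dom\Delta_e\). Instead we would modify the argument as follows. Once \(D_t(x)\neq\Delta_e(x)\) is observed for the least such \(x\), the requirement stops defining new values and keeps only the restraint from the triple for \(x\). Alternatively, since \(D\) is c.e.\ and \(D(x)\ne k\) is witnessed at a finite stage \(t\), the expansionary stages after \(t\) are at most those where \(\ell_s(e)\) exceeds \(\ell_t(e)\), and these are bounded as before because the use of \(\sigma\) freezes \(\Phi_e^{A_0}(x)=k\neq D(x)\), forcing \(\ell_s(e)<x\). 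Thus \(\dom\Delta_e\subseteq[0,\max(x,\ell_t(e))]\). Applying Lemma~\ref{lem:Friedman} again bounds the stages and hence the restraint. The main obstacle is keeping every bound \(\Sigma_1\)-definable so that only \(\mathsf{I}\Sigma_1\) and Lemma~\ref{lem:Friedman} are invoked, never \(\mathsf{B}\Sigma_2\). The \(Q_e\) case is symmetric.
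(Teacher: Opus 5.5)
Your proof is correct and is essentially the paper's argument. It uses the same three-way case analysis from the discussion preceding the lemma (diagonalization; \(\dom\Delta_e\) a proper, hence bounded, \(\Sigma_1\) initial segment; the impossible total case), with Lemma~\ref{lem:Friedman} turning a bounded domain into a bounded set of expansionary stages and hence a bounded restraint.

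One small slip in case (1): the inclusion \(\dom\Delta_e\subseteq[0,\max(x,\ell_t(e))]\) is false, since \(\Delta_e\) may already be defined above \(x\) before stage \(t\). The correct observation is that there is no expansionary stage after \(t\) at all: such a stage would need \(\ell_s(e)\) to exceed an earlier length \(\ge x\), while the preserved computation forces \(\ell_s(e)<x\). So the restraint is bounded by \(t\). No modification of the strategy is needed for this, since the construction already stops \(P_e\) once it diagonalizes (Item~\ref{it:P_success}).
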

\begin{proof}
    Take the \(P_e\)-requirement as an example.
    By the discussion above, \(\dom \Delta_e\) is bounded and therefore the restraint put on \(A_0\) is bounded.
\end{proof}

\subsection{A block of requirements}\label{sec:SST_blocking}
To carry out the proof in an \(\mathsf{I}\Sigma_1\) model, Mytilinaios~\cite{RN382} adapted the blocking method from \(\alpha\)-recursion theory.
We review this technique here.
The set \(\Lambda=\{P_{e_0},P_{e_0+1},\cdots, P_{e_1}\}\) is referred to as a \emph{block} of \(P\)-requirements (\(Q\)-requirements would be the same).
For notational convenience, we also confuse the set \(\Lambda\) with the set of their indices \(\{e_0,e_0+1,\cdots, e_1\}\).
So \(P_e\in \Lambda\) and \(e\in \Lambda\) are both valid.

As \(P\)-requirements put restraint only on \(A_0\), there are no conflicts between requirements in \(\Lambda\).
We let each requirement in \(\Lambda\) follow exactly the same strategy as in the single-requirement case.
As pointed out in Lemma~\ref{lem:single_requirement}, the restraint put on \(A_0\) is bounded for each requirement in \(\Lambda\).
But the overall restraint put on \(A_0\) is not obviously bounded in a model of \(\mathsf{I}\Sigma_1\).
We prove that it is in fact bounded.

Note that whenever \(P_e\) diagonalizes at some \(x\), it will not have another expansionary stage.
Let \(f:\Lambda\to M\) be defined by \(f(P_e)=s\) if \(P_e\) diagonalizes at stage \(s\).
Since \(f\) is partial computable with bounded domain, \(\ran f\) is bounded by some \(s_0\) by Lemma~\ref{lem:Friedman}.
Then after stage \(s_0\), no requirement in \(\Lambda\) ever diagonalizes.
Moreover, if \(\Delta_e(x)=D_s(x)\) is defined at stage \(s>s_0\) for some \(P_e\in \Lambda\) and \(x\in M\), then it is permanently correct;
otherwise, \(P_e\) diagonalizes at some stage \(t>s\), contradicting the choice of \(s_0\).

Suppose, towards a contradiction, that \(\bigcup_{e\in \Lambda}\dom \Delta_e = M\).
We claim that \(D\) is computable.
To see this, given \(x>s_0\), search for an \(e\in \Lambda\) and a stage \(s>s_0\) such that \(\Delta_e(x)\) is defined at stage \(s\).
Then \(D(x)=D_s(x)=\Delta_e(x)\).
Therefore \(D\) is computable.
Contradiction.
Hence, \(\bigcup_{e\in \Lambda}\dom \Delta_e\) is bounded by some \(b\).

Let \(g:\Lambda\times b\to M\) be defined by \(g(e,x)=s\) if \(\Delta_e(x)\) is defined at stage \(s\).
Since \(g\) is partial computable with a bounded domain, \(\ran g\) is bounded by some \(s_1>s_0\) by Lemma~\ref{lem:Friedman}.
That is, after stage \(s_1\), no requirement in \(\Lambda\) ever has another expansionary stage.
Therefore, the restraint put by requirements in \(\Lambda\) on \(A_0\) is bounded by \(s_1\).

To summarize, we have the following lemma:
\begin{lemma}\label{lem:block_of_requirements}
    Suppose there exists some \(s_0\) after which requirements in \(\Lambda\) are not initialized.
    Then all requirements in \(\Lambda\) are satisfied and there exists some stage \(s_1>s_0\)
    after which no requirement in \(\Lambda\) ever diagonalizes or has an expansionary stage.
    Also, the restraint put on \(A_0\) is bounded by \(s_1\).
\end{lemma}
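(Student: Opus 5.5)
The proof follows the discussion preceding the statement, applying Lemma~\ref{lem:Friedman} twice, with a noncomputability argument in between. Throughout, \(\Lambda=\{e_0,\dots,e_1\}\) is \(M\)-finite and hence bounded. After \(s_0\) there is no initialization, so every \(P_e\in\Lambda\) runs the single-requirement strategy of Section~\ref{sec:SST_single_requirement} undisturbed. The \(P\)-requirements restrain only \(A_0\), so they do not conflict with one another, and every restraint they set is respected.

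\textbf{Step 1: diagonalizations stop.} Define the partial computable function \(f\) on \(\Lambda\) by letting \(f(e)\) be the stage after \(s_0\) at which \(P_e\) diagonalizes, if there is one. Each requirement diagonalizes at most once, since after diagonalizing it has no further expansionary stage, so \(f\) is well defined. Its domain is bounded, so Lemma~\ref{lem:Friedman} bounds \(\ran f\) by some \(s_0'\ge s_0\). Consequently, any value \(\Delta_e(x)=D_s(x)\) defined at a stage \(s>s_0'\) is permanently correct. If it were not, \(D\) would later change at \(x\) while the restraint on \(A_0\res s\) is kept, and \(P_e\) would diagonalize after \(s_0'\), which is impossible.

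\textbf{Step 2: the total domain is bounded.} This is the step that needs a genuine argument rather than bookkeeping. Suppose \(\bigcup_{e\in\Lambda}\dom\Delta_e=M\). Then \(D\) is computable by the following procedure. Given \(x\), search for some \(e\in\Lambda\) and some stage \(s>s_0'\) at which \(\Delta_e(x)\) becomes defined, and output \(D_s(x)\); by Step 1 this value is correct. There is one subtlety: \(\Delta_e(x)\) might have been defined before \(s_0'\). However, \(\Delta_e(x)\) is defined only at expansionary stages for \(x\le\ell_s(e)\), and \(\ell\) grows along expansionary stages. So any \(x\) larger than the \(M\)-finite set of values defined by stage \(s_0'\) is defined only after \(s_0'\), and finitely many exceptions do not affect computability. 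This contradicts the noncomputability of \(D\). Hence \(\bigcup_{e\in\Lambda}\dom\Delta_e\) is bounded by some \(b\).

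\textbf{Step 3: expansionary stages stop.} Define the partial computable function \(g\) on \(\Lambda\times b\) by letting \(g(e,x)\) be the stage at which \(\Delta_e(x)\) is defined. Lemma~\ref{lem:Friedman} bounds \(\ran g\) by some \(s_1>s_0'\).

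\textbf{Conclusion.} Every expansionary stage of \(P_e\) defines a new value \(\Delta_e(x)\), because \(\ell_s(e)\) strictly increases. Therefore no requirement in \(\Lambda\) has an expansionary stage after \(s_1\), and none diagonalizes after \(s_1\). Restraints are imposed only at expansionary stages and have the form \(A_0\res s\), so the total restraint is bounded by \(s_1\). Finally, each requirement is satisfied by the case analysis of Lemma~\ref{lem:single_requirement}. Either \(P_e\) diagonalizes at some \(x\), or \(\dom\Delta_e\) is bounded with \(\Delta_e\) correct. In the latter case there are only boundedly many expansionary stages, so \(\Phi_e^{A_0}\neq D\).
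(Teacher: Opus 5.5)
Your proposal is correct and follows the paper's proof step for step: Lemma~\ref{lem:Friedman} bounds the diagonalization stages, the noncomputability of \(D\) bounds \(\bigcup_{e\in\Lambda}\dom\Delta_e\) by some \(b\), and Lemma~\ref{lem:Friedman} applied to \(g\) on \(\Lambda\times b\) bounds the stages at which \(\Delta_e\) is extended. Your handling of values defined before \(s_0'\), by discarding an \(M\)-finite initial segment, does the same job as the paper's restriction to inputs \(x>s_0\). Your remark that each expansionary stage defines a new value of \(\Delta_e\) spells out a step the paper leaves implicit.
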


We remark that the Robinson case (Lemma~\ref{lem:Robinson_block_of_requirements}) is substantially more subtle than Lemma~\ref{lem:block_of_requirements}, and its proof requires the full strength of both the hyperregularity of \(C\) and the fact that \(C'\) is \(\omega\)-c.e.
These properties are essential for bounding the overall restraint.

\subsection{Dynamic priority assignments}\label{sec:SST_dynamic_assignments}
We have blocks \(\Lambda(i)\) for \(P\)-requirements and \(\Upsilon(i)\) for \(Q\)-requirements, where \(i\) is the index of the block.
We use \(\Lambda_s(i)\) and \(\Upsilon_s(i)\) to denote the set of requirements contained in the block at stage \(s\),
and sometimes omit the subscript \(s\) when there is no confusion.
They are determined by the \emph{dynamic priority assignments} \(\lambda_s:\{P_e\mid e\in M\}\to M\) and \(\mu_s:\{Q_e\mid e\in M\}\to M\) by
\[
    \Lambda_s(i)=\lambda_s^{-1}(i) \quad \text{and} \quad \Upsilon_s(i)=\mu_s^{-1}(i).
\]
Requirements in the same block have the same priority order.
The blocks are ordered by priority as \(\Lambda(0)<\Upsilon(0)<\Lambda(1)<\Upsilon(1)<\cdots\).

\(P_m\), where \(m=\max \Lambda_{s}(i)\), is the \emph{tail requirement} in the block \(\Lambda(i)\) at stage \(s\);
\(Q_n\), where \(n=\max \Upsilon_{s}(i)\), is the \emph{tail requirement} in the block \(\Upsilon(i)\) at stage \(s\).
The idea is that if a block, say \(\Lambda(i)\), is initialized at stage \(s\),
we will add more \(P\)-requirements of lower priority into this block to compensate for the times that \(\Lambda(i)\) is initialized.

The dynamic priority assignments are defined as follows.
At the end of stage \(s\), suppose no block is initialized during the stage; we let \(\lambda_s(P_e)=\lambda_{s-1}(P_e)\) and \(\mu_s(Q_e)=\mu_{s-1}(Q_e)\) for all \(e\in M\).
Otherwise, let \(\Lambda(i)\) or \(\Upsilon(i)\) be the block of requirements of highest priority order that is initialized during the stage.
If it is \(\Lambda(i)\), let \(P_m\) be the tail requirement in \(\Lambda(i)\):
\begin{enumerate}
    \item for \(j\le m\), \(\lambda_s(P_j)=\lambda_{s-1}(P_j)\);
    \item for \(m<j\le s\), \(\lambda_s(P_j)=i\);
    \item for \(s<j\), \(\lambda_s(P_j)=\lambda_{s-1}(P_j)-(\lambda_{s-1}(P_s)-i)\).
    \item for \(j\in M\), \(\mu_s(Q_j)=\mu_{s-1}(Q_j)\).
\end{enumerate}
Item~(3) is equivalent to saying that \(\lambda_s(P_{s+j})=i+j\) for \(j\in M\); the form used in Item~(3) reveals the nonincreasing nature of \(\lambda_s\) more directly.

Analogously, if it is \(\Upsilon(i)\), let \(Q_n\) be the tail requirement in \(\Upsilon(i)\):
\begin{enumerate}
    \item for \(j\le n\), \(\mu_s(Q_j)=\mu_{s-1}(Q_j)\);
    \item for \(n<j\le s\), \(\mu_s(Q_j)=i\);
    \item for \(s<j\), \(\mu_s(Q_j)=\mu_{s-1}(Q_j)-(\mu_{s-1}(Q_s)-i)\).
    \item for \(j\in M\), \(\lambda_s(P_j)=\lambda_{s-1}(P_j)\).
\end{enumerate}
Item~(3) is equivalent to saying that \(\mu_s(Q_{s+j})=i+j\) for \(j\in M\).

From the definition of \(\lambda_s\) and \(\mu_s\), we have the following lemma:
\begin{lemma}\label{lem:nonincreasing}
    For each \(e,s\in M\) with \(s\ge 1\), \(\lambda_{s}(P_e)\le \lambda_{s-1}(P_e)\) and \(\mu_{s}(Q_e)\le \mu_{s-1}(Q_e)\).
\end{lemma}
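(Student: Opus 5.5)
The proof is a direct check against the definition of \(\lambda_s\) and \(\mu_s\). The cases are: either no block is initialized at stage \(s\), or some highest-priority block \(\Lambda(i)\) or \(\Upsilon(i)\) is initialized. By the symmetry of the two definitions it suffices to treat \(\lambda_s\) when \(\Lambda(i)\) is the block initialized. In that case \(\mu_s=\mu_{s-1}\) by item (4), so the \(\mu\) inequality is trivial; symmetrically for \(\Upsilon(i)\). If nothing is initialized, both assignments are unchanged and there is nothing to prove.

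Now fix the case \(\Lambda(i)\) with tail requirement \(P_m\), where \(m=\max\Lambda_{s-1}(i)\). I will go through the three items for \(\lambda_s\).

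\emph{Item (1).} For \(j\le m\) we have equality, so the inequality holds.

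\emph{Item (2).} For \(m<j\le s\) we need \(i\le\lambda_{s-1}(P_j)\). I will first establish, as an auxiliary invariant proved by \(\Sigma_0\)/\(\Sigma_1\) induction on \(s\) (the maps are computable, so this is fine in \(\mathsf{I}\Sigma_1\)), that each \(\lambda_s\) is nondecreasing in \(e\) and that every block \(\Lambda_s(i)\) is a nonempty interval. Granted this, \(j>m=\max\Lambda_{s-1}(i)\) forces \(\lambda_{s-1}(P_j)\ge i+1>i\). The invariant itself follows by inspecting the three items: the new assignment is \(\lambda_{s-1}\) on \([0,m]\), constantly \(i\) on \((m,s]\), and \(i+j\) at \(P_{s+j}\). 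Each of these pieces is monotone, and the pieces glue in order.

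\emph{Item (3).} For \(j>s\) it suffices that \(\lambda_{s-1}(P_s)\ge i\). This holds by the same monotonicity, provided \(s\ge m\); if \(s<m\), the second range is empty and item (3) then needs the shift \(\lambda_{s-1}(P_s)-i\) to be nonnegative.

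The main obstacle is this edge case \(s<m\), since there \(\lambda_{s-1}(P_s)\) could equal \(i\) or less. I plan to handle it by also carrying in the invariant that \(\max\Lambda_{s}(i)\le s\) for every block. This is true initially under the natural convention \(\lambda_0(P_j)=j\), and it is preserved because new tails are at most \(s\). Then \(m\le s-1\), and monotonicity gives \(\lambda_{s-1}(P_s)\ge i\), so the shift is nonnegative. Subtracting it yields \(\lambda_s(P_j)\le\lambda_{s-1}(P_j)\).
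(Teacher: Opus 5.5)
Your overall plan (case analysis on the highest initialized block, plus monotonicity of \(\lambda_{s-1}\) in \(e\)) is the right one, and it is what the paper's one-line justification ``from the definition'' tacitly relies on. You also correctly isolate the only real issue: item (3) lowers values only if \(\lambda_{s-1}(P_s)\ge i\), and given monotonicity this amounts to the tail \(m\) of the initialized block satisfying \(m\le s\). Your fix, however, does not work. The invariant ``\(\max\Lambda_s(i)\le s\) for every block'' is not merely unproved; it is impossible. Since \(\lambda_s\) is total, \(P_{s+1}\) lies in some block, and that block's tail is at least \(s+1\). Under \(\lambda_0(P_j)=j\) it already fails at \(s=0\) for every \(\Lambda_0(i)=\{P_i\}\) with \(i\ge 1\). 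It is also not preserved, because item (3) itself creates the blocks \(\Lambda_s(i+j)=\{P_{s+j}\}\), whose tails exceed \(s\). So the case \(m>s\) is never excluded, and that is exactly the case in which item (3) would raise values. Separately, \(m\le s-1\) is more than you can hope for: \(m=s\) occurs when the initialized block is the one containing \(P_s\), for example when \(\Upsilon(k)\) acts and \(P_s\in\Lambda(k+1)\). But \(m\le s\) is all you need.

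The missing ingredient is that \(m\le s\) is not a property of the maps \(\lambda_{s-1},\mu_{s-1}\) at all. Equivalently, you need \(i\le\lambda_{s-1}(P_s)\) when \(\Lambda(i)\) is initialized, and \(i\le\mu_{s-1}(Q_s)\) when \(\Upsilon(i)\) is. This concerns which block the construction initializes at stage \(s\), so it must be extracted from Parts I and II.

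\emph{Part II.} The search stops at the first block containing \(P_s\) or \(Q_s\). So the acting block is at or before it, and its successor, which is the highest block initialized, satisfies the required bound.

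\emph{Part I.} The threatened block carries a restraint, so it was reached in Part II at some earlier stage \(t\). Since initialization cancels restraints, no block of equal or higher priority has been initialized since. This keeps \(\mu_{s-1}(Q_s)\ge i\) when \(\Upsilon(i)\) is initialized, and \(\lambda_{s-1}(P_s)\ge i+1\) when \(\Lambda(i+1)\) is.

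This should be proved jointly with monotonicity and nonincrease by induction on \(s\), which is a \(\Pi_1\) induction over a computable construction and so is fine in \(\mathsf{I}\Sigma_1\). Note that without \(m\le s\) the definition is not even coherent. For \(m>s\), items (1) and (3) both prescribe \(\lambda_s(P_j)\) for \(s<j\le m\), and they disagree unless \(\lambda_{s-1}(P_s)=i\). Your gluing argument for the monotonicity invariant needs the same condition.
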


\begin{lemma}\label{lem:final_block_order_exists}
    For each \(e\in M\), \(\lim_s \lambda_s(P_e)\) and \(\lim_s \mu_s(Q_e)\) exist.
\end{lemma}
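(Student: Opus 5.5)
The plan is to combine Lemma~\ref{lem:nonincreasing} with the least number principle for \(\Sigma_1\) sets, which holds in \(M\) because \(\mathsf{L}\Sigma_1\) is equivalent to \(\mathsf{I}\Sigma_1\) (Theorem~\ref{thm:inductive_hierarchy}). I treat \(\lambda_s(P_e)\); the argument for \(\mu_s(Q_e)\) is identical.

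First I check that the map \((s,e)\mapsto\lambda_s(P_e)\) is total and computable in \(M\). At each stage the construction is effective. In particular, deciding whether some block is initialized at stage \(s\), and which one has highest priority, uses only the stage-\(s\) approximations. The clauses defining \(\lambda_s\) from \(\lambda_{s-1}\) are explicit arithmetic operations on \(\lambda_{s-1}\), together with the tail index \(m=\max\Lambda_{s-1}(i)\) and the value \(\lambda_{s-1}(P_s)\). Two points need checking here.

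\begin{enumerate}
\item The tail index must exist. The initial assignment puts only boundedly many indices in each block, and each update keeps every block an \(M\)-finite interval of indices. This makes \(\max\Lambda_{s-1}(i)\) computable from a code for \(\lambda_{s-1}\) restricted to a bounded set of indices.
\item The recursion must be legitimate. A computation history of length \(s\) exists by \(\mathsf{I}\Sigma_1\), since the definition is a primitive recursion in \(s\).
\end{enumerate}

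Hence, for fixed \(e\), the set
\[
V_e=\{v\mid \exists s\,(\lambda_s(P_e)=v)\}
\]
is \(\Sigma_1\) (indeed c.e.) and nonempty.

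Next I apply \(\mathsf{L}\Sigma_1\): \(V_e\) has a least element \(v_0\), witnessed by some stage \(s_0\) with \(\lambda_{s_0}(P_e)=v_0\). I then claim that \(\lambda_s(P_e)=v_0\) for every \(s\ge s_0\). The upper bound \(\lambda_s(P_e)\le v_0\) follows from Lemma~\ref{lem:nonincreasing} by induction on \(s\ge s_0\). This is a \(\Delta_0\) statement about the computable function, hence \(\Pi_1\) (or even \(\Delta_1\)) in \(s\), and so is available via \(\mathsf{I}\Pi_1\), which is equivalent to \(\mathsf{I}\Sigma_1\). The lower bound \(\lambda_s(P_e)\ge v_0\) holds because \(\lambda_s(P_e)\in V_e\) and \(v_0\) is the minimum of \(V_e\). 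Therefore \(\lim_s\lambda_s(P_e)=v_0\).

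The only genuinely delicate step is the first one: verifying that the dynamic assignment is a total computable function in \(M\), in particular that block tails exist at every stage. Once that is in place, the limit comes for free from \(\mathsf{L}\Sigma_1\). We do not need any bound on how many times \(\lambda(P_e)\) changes, and such a bound would itself require a \(\Sigma_2\) induction.
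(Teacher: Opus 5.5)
Your proposal is correct and follows essentially the same approach as the paper: apply \(\mathsf{L}\Sigma_1\) to the \(\Sigma_1\) set of values attained by \(\lambda_s(P_e)\), then use the monotonicity from Lemma~\ref{lem:nonincreasing} to conclude that the sequence is eventually constant at that least value. Your extra checks, namely that the assignment is a total computable function and the explicit \(\Delta_1\) induction for the upper bound, are details the paper leaves implicit.
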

\begin{proof}
    The set \(\{i\mid \exists s ( \lambda_s(P_e)=i)\}\) is a \(\Sigma_1\) set and so it has a least element \(i_0\) by \(\mathsf{L}\Sigma_1\).
    Since \(\lambda_{s+1}(P_e)\le \lambda_s(P_e)\), we have \(\lim_s \lambda_s(P_e)=i_0\).
    Similarly, \(\lim_s \mu_s(Q_e)\) exists.
\end{proof}

Take \(P\)-requirements as an example.
We define \(i=\lim_s \lambda_s(P_e)\) to be the \emph{final priority order} of \(P_e\).
If there is a least stage \(s\) such that for all \(t>s\), \(\Lambda_t(i)=\Lambda_s(i)\), we say that the block \(\Lambda(i)\) \emph{stabilizes} at stage \(s\).
This stage \(s\) agrees with the stage when \(\Lambda(i)\) is initialized for the last time, according to the definition of dynamic priority assignments.

\subsection{The construction and verification}\label{sec:SST_construction}
Let us now introduce some more terminology and present the construction.
At stage \(s\), the restraint set by \(\Lambda_s(i)\) is denoted by \(\rho_s(i)\) and the restraint set by \(\Upsilon_s(i)\) is denoted by \(\tau_s(i)\).
If no restraint is set up, we let \(\rho_s(i)=\tau_s(i)=-1\).
For a number \(x\in M\), we say \(x\) \emph{threatens} \(\Lambda_s(i)\) or \(\Upsilon_s(i)\) if \(x\le \rho_s(i)\) or \(x\le \tau_s(i)\), respectively.
When we initialize a block, we initialize all requirements in the block, and initialize all blocks of lower priority.
If a requirement is initialized, its \emph{local} functional \(\Delta_e\) or \(\Gamma_e\) and all diagonalizing triples are canceled.

We assume that for each \(s\in M\), \(B_s\setminus B_{s-1}\) is a singleton set if \(s\) is odd; an empty set if \(s\) is even.

\textbf{The construction:}
At stage~\(s\), if \(s\) is odd, we start with Part I; if \(s\) is even, we start with Part II.

\textbf{Part I. Enumeration of \(A_0\) and \(A_1\):}
Let \(x\in B_s\setminus B_{s-1}\).
Let the block of requirements of least priority order that \(x\) threatens be \(\Lambda(i)\) (or \(\Upsilon(i)\), respectively), if it exists.
\begin{enumerate}[label=(i.\arabic*)]
    \item If it is \(\Lambda(i)\), then we enumerate \(x\) into \(A_1\).
    Initialize the block \(\Upsilon(i)\);
    \item If it is \(\Upsilon(i)\), then we enumerate \(x\) into \(A_0\).
    Initialize the block \(\Lambda(i+1)\).
\end{enumerate}
If no block of requirements is threatened, we enumerate \(x\) into \(A_0\).
We then proceed to Part III.

\textbf{Part II. Block strategy:}
Let the block \(\Lambda(0)\) be eligible to act.
Suppose, without loss of generality, that the block \(\Lambda(i)\) is eligible to act.
For each \(P_e\in \Lambda(i)\), we perform the following \(P_e\)-strategy:
\begin{enumerate}[label=(ii.\arabic*)]
    \item\label{it:P_success} Suppose it has diagonalized at some \(x\); we stop the \(P_e\)-strategy.
    \item\label{it:P_diagonalize} Suppose for some \(x\in \dom \Delta_e\) with \((\sigma,x,k)\) being the diagonalizing triple,
    \(\Delta_e(x)=k\neq D_s(x)\), we say that \(P_e\) diagonalizes at \(x\).
    Then we stop \(P_e\)-strategy.
    \item\label{it:P_expansionary} Suppose \(s\) is a \(P_e\)-expansionary stage; we define \(\Delta_e(x)=D_s(x)\) for all \(x\le \ell_s(e)\).
    We define \(\rho_s(i)=s\).
    Then we stop the \(P_e\)-strategy.
    \item\label{it:P_no_action} Suppose \(s\) is not a \(P_e\)-expansionary stage; we stop the \(P_e\)-strategy.
\end{enumerate}
If Item~\ref{it:P_expansionary} does not happen for all \(P_e\in \Lambda(i)\),
we let \(\rho_s(i)=\rho_{t}(i)\) where \(t<s\) is the last stage when \(\Lambda(i)\) is eligible to act.
We say that the \(P_e\)-requirement \emph{acts} at stage~\(s\) if Item~\ref{it:P_diagonalize} or Item~\ref{it:P_expansionary} happens.

After all requirements in \(\Lambda(i)\) are stopped, if some requirement in \(\Lambda(i)\) acts at stage \(s\), we initialize blocks of lower priority and start Part III;
otherwise, we let the block with the next highest priority order be eligible to act,
until we reach a block that contains \(P_s\) requirement or \(Q_s\) requirement,
at which point we start Part III.

\textbf{Part III. Dynamic priority assignments:}
We refer the reader to the rules for \(\lambda_s\) and \(\mu_s\) in Section~\ref{sec:SST_dynamic_assignments}.

This completes the construction.
We now proceed to the verification,
using Lemma~\ref{lem:single_requirement}, Lemma~\ref{lem:block_of_requirements}, and Lemma~\ref{lem:final_block_order_exists},
which have already been proved.

\begin{lemma}\label{lem:block_not_initialized}
    Suppose \(\Lambda(i)\) (or \(\Upsilon(i)\), respectively) is not initialized after \(s_0\).
    Then there exists some stage \(s_1 > s_0\) after which \(\Upsilon(i)\) (or \(\Lambda(i+1)\), respectively) is not initialized.
\end{lemma}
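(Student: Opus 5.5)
The plan is to locate every way \(\Upsilon(i)\) can be initialized after \(s_0\), and to bound each source using Lemma~\ref{lem:block_of_requirements}.
I treat the case where \(\Lambda(i)\) is not initialized after \(s_0\); the case where \(\Upsilon(i)\) is not initialized and we want \(\Lambda(i+1)\) is symmetric, with the roles of \(A_0,A_1\) exchanged.

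By the construction, a block is initialized only in three ways:
\begin{itemize}
    \item[(a)] directly in Part~I, by item (i.1), which initializes \(\Upsilon(i)\) exactly when some \(x\in B_s\setminus B_{s-1}\) threatens \(\Lambda(i)\) but no block of lower priority;
    \item[(b)] as a side effect of initializing a block of higher priority;
    \item[(c)] in Part~II, when some requirement in a block of higher priority acts, so that all blocks of lower priority are initialized.
\end{itemize}
The blocks of priority higher than \(\Upsilon(i)\) are \(\Lambda(0),\Upsilon(0),\dots,\Lambda(i)\).

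Since \(\Lambda(i)\) is not initialized after \(s_0\), no block of priority \(\le\Lambda(i)\) is initialized after \(s_0\), because initializing any of them also initializes \(\Lambda(i)\).
Hence sources (b) and (c) can only come from blocks \(\Lambda(j),\Upsilon(j)\) with \(j<i\), or \(\Lambda(j)\) with \(j\le i\), acting in Part~II.
Each of these blocks is itself never initialized after \(s_0\), and its membership is fixed after \(s_0\), since \(\lambda_s\) and \(\mu_s\) change only at initializations of the block or of a higher one.
So Lemma~\ref{lem:block_of_requirements} (and its \(Q\)-analogue) applies to each of them and yields a stage \(s^{(B)}\) after which that block never diagonalizes and never has an expansionary stage, that is, never acts.

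To combine these stages, note that there are only boundedly many blocks (at most \(2i+1\)).
The map sending a block to the last stage at which it acts is partial computable on a bounded domain, so Lemma~\ref{lem:Friedman} bounds all of them by a single stage \(t_0\).
Strictly, one applies Friedman's lemma to the function \(B\mapsto\) "a stage at which \(B\) acts" restricted to stages after \(s_0\).
Alternatively one can take the maximum over a bounded set directly, using \(\mathsf{B}\Sigma_1\), which follows from \(\mathsf{I}\Sigma_1\).
After \(t_0\), source (c) never fires for \(\Upsilon(i)\).

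For source (a), Lemma~\ref{lem:block_of_requirements} also says that the restraint \(\rho_s(i)\) is eventually constant, bounded by some \(r\).
Since \(B\) is enumerated one new element per odd stage, only boundedly many stages enumerate elements \(\le r\).
More precisely, the set of stages at which some \(x\le r\) enters \(B\) is the range of a partial computable function on the bounded set \(\{0,\dots,r\}\), hence bounded by Lemma~\ref{lem:Friedman}, say by \(t_1\).
After \(\max(s_0,t_0,t_1)\), no \(x\) threatens \(\Lambda(i)\), so (i.1) never initializes \(\Upsilon(i)\).
Taking \(s_1\) to be this maximum finishes the proof.

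The main obstacle is uniformly bounding the actions of all higher-priority blocks inside \(\mathsf{I}\Sigma_1\), where a naive induction on blocks would be \(\Sigma_2\).
The point is that all these blocks are \emph{simultaneously} stable after \(s_0\), so the per-block stages can be bounded at once by Friedman's lemma rather than by induction.
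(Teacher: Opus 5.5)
The gap is in how you combine the stages for the higher-priority blocks. The conclusion is still true: a simple observation makes that step unnecessary, and with it your proof becomes the paper's. Your treatment of source (a) is the paper's argument, and your reduction of sources (b) and (c) is correct as far as it goes.

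The map \(B\mapsto s^{(B)}\), sending a block to a stage after which it never acts, is not partial computable. It is defined by the \(\Pi_1\) condition \(\forall t>s\,(B\text{ does not act at }t)\). So neither Lemma~\ref{lem:Friedman} nor \(\mathsf{B}\Sigma_1\) bounds its range. Since \(i\) may be nonstandard, you also cannot take the maximum externally. Bounding these \(2i+1\) settling stages is an instance of \(\mathsf{B}\Pi_1\), i.e.\ \(\mathsf{B}\Sigma_2\). That is exactly the shortcut described in Section~\ref{sec:discussion}, which is not available over \(\mathsf{I}\Sigma_1\).

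Your ``strict'' variant does not repair this. Bounding the range of \(B\mapsto\) (some stage \(>s_0\) at which \(B\) acts) says nothing about whether \(B\) acts again later. If Lemma~\ref{lem:Friedman} could bound settling times this way, the two-step \(f\)/\(g\) argument in the proof of Lemma~\ref{lem:block_of_requirements} would be superfluous. So your closing claim, that simultaneous stability lets Friedman's lemma replace the induction, rests on a misconception.

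The missing observation is this: in Part~II, whenever a requirement in some block acts, all blocks of lower priority are initialized. So if any block of strictly higher priority than \(\Lambda(i)\) acted after \(s_0\), then \(\Lambda(i)\) would be initialized, contrary to hypothesis. After \(s_0\), \(\Upsilon(i)\) can therefore be initialized only through \(\Lambda(i)\) itself, either by \(\Lambda(i)\) acting in Part~II or by \(\Lambda(i)\) being threatened in (i.1).

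One application of Lemma~\ref{lem:block_of_requirements} to \(\Lambda(i)\), together with your bound \(t_1\) on when small elements enter \(B\), handles both. Take \(t_0\) to be that single stage from Lemma~\ref{lem:block_of_requirements}; then the rest of your proof goes through and coincides with the paper's.
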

\begin{proof}
    By Lemma~\ref{lem:block_of_requirements}, there exists some \(s^* > s_0\) such that the requirements in \(\Lambda(i)\) do not act after \(s^*\),
    and the restraint put on \(A_0\) is bounded by \(s^*\).
    There exists some stage \(s_1 > s^*\) such that no point \(x \leq s^*\) is enumerated into \(B\).
    Hence, by Part I of the construction, \(\Lambda(i)\) is not threatened after \(s_1\).
    Thus, \(\Upsilon(i)\) is not initialized after \(s_1\).
\end{proof}

\begin{lemma}\label{lem:SST_satisfied}
    For each \(e \in M\), \(P_e\) and \(Q_e\) are satisfied.
\end{lemma}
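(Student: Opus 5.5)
Fix \(e\in M\); by symmetry it suffices to treat \(P_e\). By Lemma~\ref{lem:final_block_order_exists}, let \(i=\lim_s\lambda_s(P_e)\) be its final priority order. The plan is to prove, by induction on \(j\le i\), that every block of priority at most \(\Lambda(i)\) is eventually never initialized. Once \(\Lambda(i)\) is not initialized after some stage, Lemma~\ref{lem:block_of_requirements} (or Lemma~\ref{lem:single_requirement} applied to \(P_e\)) gives that \(P_e\) is satisfied.

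The statement to prove by induction is: for each \(j\le i\) there is a stage after which \(\Lambda(j)\) is not initialized, and likewise \(\Upsilon(j)\) for \(j<i\). This is \(\Sigma_2\), so we cannot induct on it directly in \(\mathsf{I}\Sigma_1\). I would avoid this as follows.

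First, rephrase. Since \(\lambda_s(P_e)\) is nonincreasing and reaches \(i\) at some stage \(t_0\), and a block \(\Lambda(j)\) with \(j\le i\) being initialized at a stage \(s>t_0\) changes \(\lambda_s(P_{s'})\) only for indices beyond its tail, it follows that \(\Lambda(i)\) being initialized after \(t_0\) would reassign \(P_e\) unless \(P_e\le\) the tail. More directly: every initialization of a block of priority \(\le\Lambda(i)\) enlarges the block of highest priority that is initialized. In particular, \(\Lambda(0),\dots,\Lambda(i)\) together contain more and more requirements, and these requirements never leave.

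Second, use Lemma~\ref{lem:block_not_initialized} as the inductive step. \(\Lambda(0)\) is never initialized, since nothing has higher priority and the construction only initializes \(\Upsilon(j)\) or \(\Lambda(j+1)\). So the base case holds with \(s_0=0\). To turn the chain of implications into a \(\Sigma_1\)/\(\Pi_1\) induction, I would instead argue by \(\mathsf{L}\Pi_1\)/least-number principle on the set of \(j\le 2i\) (indexing blocks in priority order) such that the block of index \(j\) is initialized after stage \(t_0\).

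The key observation is this. Since \(P_e\) has already reached its final order \(i\) at \(t_0\), every initialization after \(t_0\) of a block \(\Lambda(j)\) or \(\Upsilon(j)\) with \(j\le i\) triggers Part III. That step assigns the indices \(m<k\le s\) into block \(j\) and shifts later indices downward. This keeps \(P_e\) at order \(i\) only if \(e\le m\), where \(m\) is the tail. Hence only boundedly many such initializations can occur, because each one strictly increases the number of \(P\)- (or \(Q\)-) requirements placed in blocks of order \(\le i\), while \(P_e\) must remain at order \(i\).

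To make that counting rigorous I would argue as follows. Initializing \(\Lambda(j)\) at stage \(s\) with \(j\le i\) and \(\lambda_{s-1}(P_e)=i\) gives either \(e\le m\), or \(\lambda_s(P_e)<i\); the latter is impossible after \(t_0\). Moreover \(\lambda_s(P_e)\) equals \(i\) only if no block of order \(\le i\) pulls in \(P_e\). Consequently the stages after \(t_0\) at which a block of order \(\le i\) is initialized form a \(\Sigma_1\)-definable set, and the hard part is showing it is bounded. I would bound it via a partial computable map from this set of initializations into a bounded set of requirement indices, then apply Lemma~\ref{lem:Friedman} with \(\mathsf{I}\Sigma_1\) to conclude boundedness.

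I expect the main obstacle to be exactly this bounding step, that is, avoiding \(\Sigma_2\)-induction. If the counting argument fails, the fallback is to use \(\mathsf{L}\Pi_1\) to pick the highest-priority block \(X\) of order \(\le i\) that is initialized unboundedly often. All blocks above \(X\) are then eventually not initialized, which is a \(\Pi_1\) statement after a fixed stage, obtained by collecting finitely many bounds via Lemma~\ref{lem:Friedman}. Lemma~\ref{lem:block_not_initialized} then shows \(X\) is also eventually not initialized, a contradiction.
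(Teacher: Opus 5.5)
Your target is right: get \(\Lambda(i)\) eventually uninitialized, then apply Lemma~\ref{lem:block_of_requirements}. But the step meant to avoid \(\Sigma_2\)-induction does not work, so the proposal has a genuine gap.

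Your claim that only boundedly many initializations of blocks of order \(\le i\) occur after \(t_0\) has no support. Sort these initializations by their top block:
\begin{itemize}
\item Top block \(\Lambda(j)\) with \(j<i\): this is simply impossible after \(t_0\).
\item Top block \(\Lambda(i)\): this leaves \(P_e\) in place (\(e\le m\)) and only enlarges \(\Lambda(i)\).
\item Top block \(\Upsilon(j)\): this changes only \(\mu\) and never touches \(P_e\).
\end{itemize}
So ``\(P_e\) must remain at order \(i\)'' bounds nothing. That the requirements in blocks of order \(\le i\) stabilize is exactly what has to be proved.

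Lemma~\ref{lem:Friedman} cannot rescue this step. It would help only with a partial computable map of bounded domain onto the set of initialization stages, for instance by inverting an injection of that set into a bounded set. The requirement indices pulled in by these initializations are not known to be bounded, so no such map is available.

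Your \(\mathsf{L}\Sigma_1\)-step on ``blocks initialized after \(t_0\)'' produces one more stabilized block per application. Reaching a possibly nonstandard \(i\) that way is again a \(\Sigma_2\)-induction.

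The fallback fails for the same reason. ``Initialized unboundedly often'' is \(\Pi_2\), so picking the highest-priority such block needs \(\mathsf{L}\Pi_2\), i.e.\ \(\mathsf{I}\Sigma_2\). Gathering witnesses for ``eventually not initialized'' across all blocks above \(X\) is an instance of \(\mathsf{B}\Sigma_2\). Lemma~\ref{lem:Friedman} does not apply there, because ``the last stage at which a block is initialized'' is not partial computable.

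The missing idea is that no induction over blocks is needed. The dynamic assignment has already folded that induction into the \(\mathsf{L}\Sigma_1\) argument of Lemma~\ref{lem:final_block_order_exists}. You even write down the relevant fact: an initialization with top block \(\Lambda(j)\), \(j<i\), would lower \(\lambda(P_e)\). But you then try to count instead of using it.

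The paper proceeds as follows. It takes \(s_0\) least with \(\lambda_{s_0}(P_e)=i\) and concludes from that fact that \(\Lambda(i-1)\) is not initialized after \(s_0\). It then applies Lemma~\ref{lem:block_not_initialized} exactly twice, \(\Lambda(i-1)\Rightarrow\Upsilon(i-1)\Rightarrow\Lambda(i)\), and finishes with Lemma~\ref{lem:block_of_requirements}. This uses a fixed number of \(\Sigma_2\)-steps, independent of \(i\), which is why it is fine in \(\mathsf{I}\Sigma_1\).

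One caveat on the paper's side, related to your conflation of \(\Lambda\)- and \(\Upsilon\)-initializations. An initialization whose top block is \(\Upsilon(j)\) with \(j\le i-2\) also initializes \(\Lambda(i-1)\), yet Part III as written changes only \(\mu\). So that first step tacitly needs Part III to move \(P\)-requirements in this case too, e.g.\ by pulling those beyond the tail of \(\Lambda(j+1)\) into \(\Lambda(j+1)\).
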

\begin{proof}
    The case for \(Q_e\) is analogous; we take \(P_e\) as an example.
    By Lemma~\ref{lem:final_block_order_exists}, let \(i = \lim_s \lambda_s(P_e)\) be the final priority order of \(P_e\),
    and let \(s_0\) be the first stage when \(\lambda_s(P_e) = i\).
    For each \(s > s_0\), \(\Lambda(i-1)\) is not initialized at stage \(s\), since otherwise \(\lambda_s(P_e) \leq i-1\).
    By Lemma~\ref{lem:block_not_initialized}, there exists some stage \(s_1 > s_0\) after which \(\Upsilon(i-1)\) is not initialized.
    By the same lemma, there exists some stage \(s_2 > s_1\) after which \(\Lambda(i)\) is not initialized.
    By Lemma~\ref{lem:block_of_requirements}, \(P_e\) is satisfied.
\end{proof}

This completes the proof of Theorem~\ref{thm:sacks}.
Based on this construction, we prove Theorem~\ref{thm:main} in the next section.

We emphasize that the use of \(\mathsf{I}\Sigma_1\) is crucial in the proof of Lemma~\ref{lem:block_of_requirements} (see Section~\ref{sec:SST_blocking}),
particularly for establishing the boundedness of the restraint imposed on \(A_0\).
In this argument, it is essential that \(D\) is c.e., ensuring that the function \(f\) remains \(\Sigma_1\), so the reasoning works within the framework of \(\mathsf{I}\Sigma_1\).
If \(D\) were not c.e., \(f\) would not be \(\Sigma_1\), and the proof would break down in \(\mathsf{I}\Sigma_1\).
However, if we instead work in the standard model \(\mathbb{N}\), \(D\) could be any \(\Delta_2\) set, and our construction would require only minor modifications.

\section{Proof of Theorem~\ref{thm:main}}\label{sec:robinson}
We fix a model \(M\) of \(\mathsf{P}^- +\mathsf{I}\Sigma_1\).
Let \(B, C, D \subseteq M\) be c.e.\ sets with \(C\) superlow and \(D \nleq_T C\).
We construct \(A_0\) and \(A_1\) such that \(B = A_0 \sqcup A_1\), and these sets satisfy the following requirements for each \(e \in M\):
\begin{align*}
    P_e: &\quad D = \Phi_e^{A_0 \oplus C} \to D = \Delta_e^C;\\
    Q_e: &\quad D = \Psi_e^{A_1 \oplus C} \to D = \Gamma_e^C.
\end{align*}
Here, each \(P_e\)-requirement and \(Q_e\)-requirement builds and maintains local functionals \(\Delta_e^C\) and \(\Gamma_e^C\), respectively, toward contradictions.

Robinson's trick is to consider \[
    X = \{j \mid \exists \sigma \in W_j,\, C\in [\sigma] \}.
\]
\(X\) is \(\Sigma_1^C\) and therefore \[
    X\le_1 C'\le_{wtt} \varnothing'.
\]
Since \(C'\) is \(\omega\)-c.e.\@, so is \(X\).
Let \(p\) and \(q\) be computable functions such that \[
    X(j) = \lim_s p(j,s)
\]
for all \(j\in M\), \(p(j,0)=0\) and \[
    \abs{\{s\mid p(j,s+1)\neq p(j,s)\}} \le q(j).
\]
The idea for satisfying the \(P_e\)-requirement is as follows:
Suppose \(\Phi_{e,s}^{A_{0,s}\oplus C_s}(x)\downarrow = D_s(x)\).
We first check whether this convergent computation is \emph{certified}.
If it is, we define \(\Delta_e^{C_s}(x)=D_s(x)\); otherwise, we treat the computation as divergent and do nothing.
To certify a computation, we use a c.e.\ set \(W_j\), referred to as the \emph{guessing set}, whose index \(j\) is given by the recursion theorem.
This \(j\) depends on \(e\), \(x\), and the number of times a relevant requirement has been initialized or injured.
Next, we describe how we certify a computation---this is the original idea behind Robinson's trick.
Our treatment is somewhat more technical, as we must handle ``finiteness'' carefully in models of \(\mathsf{P}^- + \mathsf{I}\Sigma_1\);
in the standard model, the verification is more intuitive and is sometimes even omitted.

Suppose \(\Phi_{e,s}^{A_{0,s}\oplus C_s}(x)\downarrow = k\) with use \(l\), and let \(\theta=A_{0,s}\res l\) and \(\sigma = C_s\res l\).
We refer to \(\Phi_{e,s}^{A_{0,s}\oplus C_s}(x)\downarrow\) as a \emph{computation} and \((\theta, \sigma, x, k)\) as the \emph{axiom} of this computation.
We also have a set \(F_e(x)\), collecting axioms of certified computations.
The certification procedure for this computation is as follows:
\begin{enumerate}[label=(C.\arabic*),ref=(C.\arabic*)]
    \item If \(C_s \notin [W_j]\), enumerate \(\sigma\) into \(W_j\).
    \item\label{it:certify_check} Let \(t \ge s\) be the least stage such that either \(C_t \notin [\sigma]\) or \(p(j, t) = 1\).
    \begin{enumerate}[label=(C.\arabic{enumi}\alph*),ref=(C.\arabic{enumi}\alph*)]
        \item\label{it:certify_not} If \(C_t \notin [\sigma]\), this computation is \emph{not certified}. Skip Item~\ref{it:certify_true}.
        \item\label{it:certify_true} If \(p(j, t) = 1\), this computation is \emph{certified}.
        Enumerate \((\theta, \sigma, x, k)\) into \(F_e(x)\).
    \end{enumerate}
\end{enumerate}
The stage \(t\) in Item~\ref{it:certify_check} exists since
\[
    C\in [W_j] \iff \lim_s p(j,s) = 1.
\]

Suppose the computation is certified at stage \(s\).
We have for any \(u\) with \(s\le u\le t\), \(C_u\in [\sigma]\).
We also want \(A_{0,u}\in [\theta]\) to preserve the computation.
So we impose a restraint on \(A_0\res l\) to achieve this.
Conversely, if the computation is not certified at stage \(s\) (so \(C_t\notin[\sigma]\)),
then we place no restriction on \(A_0\res l\), allowing points to be enumerated into \(A_0\).
If for some \(u\) with \(s\le u\le t\), we observe \(\Phi_{e,u}^{A_{0,u}\oplus C_u}(x)\downarrow\),
the certification procedure will not certify this computation at stage \(u\), as we still have \(C_t \notin [\sigma]\).
So we may simply ignore this computation at stage \(u\).

For clarity of discussion, we say that a computation is \emph{injured} if \(A_0\res l\) changes while the computation is certified;
likewise, \(F_e(x)\) is said to be \emph{injured} if any of its certified computations become injured.
Once a computation is injured, we cannot certify any subsequent computations using the same guessing set,
as it may turn out that \(C\in[\sigma]\) so all computations are automatically certified.
In such cases, if we really want to have a meaningful certification, we must refresh the index \(j\) of the guessing set \(W_j\), and also clear the set \(F_e(x)\).
Eventually, we will show that \(F_e(x)\) will not be injured unless the \(P_e\)-requirement is initialized.

\(F_e(x)\) is \emph{initialized} if the \(P_e\)-requirement is initialized.
If \(F_e(x)\) is injured or initialized, we refresh the index \(j\) of the guessing set \(W_j\) and clear the set \(F_e(x)\).
Using the language of \(F_e(x)\), a computation is \emph{certified} at stage \(s\) for the first time if and only if its axiom is enumerated into \(F_e(x)\) at stage \(s\).

\begin{lemma}\label{lem:F_bounded}
    Suppose \(F_e(x)\) is not injured or initialized after \(s_0\).
    Then \(F_e(x)\) is bounded.
\end{lemma}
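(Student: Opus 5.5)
The plan is to fix the index \(j\) of the guessing set \(W_j\) that \(F_e(x)\) uses after \(s_0\). Since \(F_e(x)\) is neither injured nor initialized after \(s_0\), \(j\) is never refreshed, so every axiom entering \(F_e(x)\) after \(s_0\) is certified through this single \(W_j\). Axioms enumerated before \(s_0\) form an \(M\)-finite set and can be ignored. I will show that only a number of axioms bounded in terms of \(q(j)\) can be certified after \(s_0\). Then I will apply Lemma~\ref{lem:Friedman} to the partial computable function sending \(n\) to the stage at which the \(n\)-th post-\(s_0\) axiom is certified, defined on the bounded domain \(n\le q(j)+1\). Its range is bounded, and so is \(F_e(x)\).

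\textbf{Step 1: a correct certification ends the process.} Suppose an axiom \((\theta,\sigma,x,k)\) is certified after \(s_0\) with \(C\in[\sigma]\). Because there is no injury, the restraint keeps \(A_0\in[\theta]\) from then on, so \(\theta\oplus\sigma\) is a true initial segment of \(A_0\oplus C\) below the use \(l\). Now take any later computation \(\Phi_{e,u}^{A_{0,u}\oplus C_u}(x)\). Its oracle agrees with \(\theta\oplus\sigma\) below \(l\), so by the use principle it is the same computation with the same axiom. Hence at most one further distinct axiom can appear after the first correct one.

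\textbf{Step 2: each wrong certification forces a mind change of \(p(j,\cdot)\).} Suppose an axiom with string \(\sigma\) is certified at stage \(s\) with witnessing stage \(t\). Then \(p(j,t)=1\) and \(C_t\in[\sigma]\). If \(C\notin[\sigma]\), \(C\) later leaves \(\sigma\). Consider the next certification that requires enumerating a new string into \(W_j\), i.e.\ one made when \(C\) has left every string currently in \(W_j\). I want to charge an alternation \(1\to0\) or \(0\to1\) of \(p(j,\cdot)\) to each such episode. The idea is that \(\lim_s p(j,s)=X(j)\), and \(p\) cannot wrongly remain at \(1\) through infinitely many abandoned episodes. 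To make this honest, I would consider the episodes in which \(C_s\notin[W_j]\) and \(p(j,s)=0\) before certification. Certification then needs a change \(0\to1\), so these episodes are counted directly by \(q(j)\). The remaining episodes start with \(p(j,s)=1\) while \(C_s\notin[W_j]\). Here I would use the fact that \(C\) enumerates and approximations only move upward: either \(C\) really lies in \([W_j]\) via an earlier string, in which case Step~1 applies, or \(p\) must eventually drop to \(0\), which is another change charged to that episode. Certifications that reuse a string already in \(W_j\) (no new enumeration) are handled as in Step~1, relative to whether that string is an initial segment of \(C\).

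\textbf{Step 3: conclude the bound.} Steps 1 and 2 together bound the number of axioms certified after \(s_0\) by roughly \(2q(j)+2\). The certification stages are then given by a partial computable function on an \(M\)-finite domain, and Lemma~\ref{lem:Friedman} bounds its range. So all axioms, and with them their uses and the restraint they impose, are bounded.

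I expect the main obstacle to be Step 2, namely making the charging of episodes to changes of \(p(j,\cdot)\) injective. The difficulty is episodes where \(p(j,\cdot)\) is already \(1\) on entry, and certifications that reuse an old string of \(W_j\) without enumerating anything new. I would first try to prove the auxiliary claim that between two consecutive certifications built on different, incompatible strings, \(p(j,\cdot)\) must take the value \(0\) at some intermediate stage.
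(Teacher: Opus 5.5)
You have a genuine gap in Steps 2--3. Step 1 is correct; it is the second case of the paper's proof. The problem is the counting. The number of axioms certified after \(s_0\) is not bounded by any function of \(q(j)\), and your proposed auxiliary claim is false: \(p(j,\cdot)\) need not take the value \(0\) between two certifications on incompatible strings.

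Nothing ties \(p(j,s)\) to the current state of \(C_s\) and \(W_{j,s}\); \(p\) only has to be correct in the limit. Suppose \(p(j,\cdot)=1\) on a stretch of stages. There, the least \(t\ge s\) in Item~\ref{it:certify_check} is \(t=s\) itself, because \(C_s\in[\sigma]\) and \(p(j,s)=1\). So each time \(C\) changes below the use and the computation reconverges to \(D_s(x)\) on a new, incompatible string \(\sigma'\), it is certified at once, with no mind change of \(p(j,\cdot)\). A single later drop to \(0\), the change you want to charge, would have to pay for all of these certifications. How many there are depends on how often \(C\) changes below the use, which is not computably bounded in \(j\). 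So no injective charging exists, and Lemma~\ref{lem:Friedman} cannot be applied to an ``\(n\)-th certification'' function on a domain of size about \(q(j)\).

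The missing idea is to bound the \emph{stage} after which nothing enters \(F_e(x)\), not the number of certifications, by splitting on \(\lim_s p(j,s)=X(j)\). Take a stage \(s_1\) after which \(p(j,\cdot)\) is constant. Your instinct to use Lemma~\ref{lem:Friedman} on a domain bounded by \(q(j)\) fits here, applied to the stages of the \(k\)-th mind change of \(p(j,\cdot)\) rather than to certifications.
\begin{itemize}
\item If the limit is \(0\), then after \(s_1\) Item~\ref{it:certify_true} can never fire, so nothing enters \(F_e(x)\) after \(s_1\).
\item If the limit is \(1\), then \(C\in[W_j]\). Pick \(\sigma\in W_j\) with \(C\in[\sigma]\), enumerated together with an axiom \((\theta,\sigma,x,k)\). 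Since \(C\) never leaves \(\sigma\) and \(p(j,\cdot)\) is eventually \(1\), that computation is certified and \(\Delta_e^\sigma(x)\) is never cancelled. With no injury, Item~\ref{it:Px_defined} holds at every later stage, so no further axiom is enumerated; this is your Step 1.
\end{itemize}
In either case every axiom enters \(F_e(x)\) before a fixed stage, which gives boundedness. This is exactly the paper's proof.
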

\begin{proof}
    Let \(W_j\) be the guessing set.
    Suppose \(\lim_s p(j,s) = 0\) with \(s_1\) being the stage such that for each \(s > s_1\), \(p(j,s) = 0\).
    Then for each \(s \geq s_1\), no computation can be certified.
    Therefore, for each \(s > s_1\), we enumerate no axioms into \(F_e(x)\).
    So, \(\abs{F_e(x)}\) is \(M\)-finite.

    Suppose \(\lim_s p(j,s) = 1\).
    Then \(C\in[W_j]\).
    Let \(\sigma\in W_j\) be such that \(C\in [\sigma]\).
    Suppose \(\sigma\) is enumerated into \(W_j\) at stage \(s_1\) with \((\theta,\sigma,x,k)\) the axiom of the computation
    \(\Phi_{e,s_1}^{A_{0,s_1}\oplus C_{s_1}}(x)\downarrow = k\).
    Since \(F_e(x)\) is not injured, \(A_{0,s}\in [\theta]\) for all \(s\ge s_1\).
    Since \(C\in [\sigma]\), this computation is certified at all stages \(s\ge s_1\).
    Therefore, \((\theta,\sigma,x,k)\) is the last axiom enumerated into \(F_e(x)\).
\end{proof}

\subsection{Single requirement}\label{sec:Robinson_single_requirement}
Without loss of generality, we take the \(P_e\)-requirement as an example.
We first consider the following subrequirement of \(P_e\), denoted \(P_e(x)\) for each \(x\in M\):
\[
    P_e(x): \quad D(x) = \Phi_e^{A_0\oplus C}(x) \to D(x) = \Delta_e^C(x).
\]

\textbf{\(P_e(x)\)-strategy:}
At stage \(s\), if \(F_e(x)\) is injured or initialized, we refresh the index \(j\) of the guessing set \(W_j\) and clear the set \(F_e(x)\).
If \(\Phi_e^{A_{0,s}\oplus C_s}(x)\uparrow\) or \(\Phi_e^{A_{0,s}\oplus C_s}(x)\downarrow \neq D_s(x)\), we do nothing;
otherwise, we proceed as follows:
\begin{enumerate}[label=(Px.\arabic*), ref=(Px.\arabic*)]
    \item\label{it:Px_defined} If \(\Delta_e^{C_s}(x)\downarrow\), we do nothing;
    \item\label{it:Px_not_defined} If \(\Delta_e^{C_s}(x)\uparrow\) and \(\Phi_{e,s}^{A_{0,s}\oplus C_s}(x)\downarrow = D_s(x)=k\) with \((\theta,\sigma,x,k)\) the axiom,
    we perform the certification procedure for this computation.
    \begin{enumerate}[label=(Px.\arabic{enumi}\alph*), ref=(Px.\arabic{enumi}\alph*)]
        \item\label{it:Px_not_certified} If the computation is not certified, we do nothing;
        \item\label{it:Px_certified} otherwise, we define \(\Delta_e^\sigma(x)=k\), call \((\theta,\sigma,x,k)\) the \emph{diagonalizing axiom},
        and put a restraint on \(A_0\res s\) to preserve the computation.
        (We assume that \(s>\abs{\theta}\).)
        We call \(P_e(x)\) \emph{acts} at stage \(s\).
        This diagonalizing axiom is automatically canceled at future stage \(t\) if \(C_t\notin [\sigma]\).
    \end{enumerate}
\end{enumerate}
Note that we do not perform the certification procedure in Item~\ref{it:Px_not_defined} if \(\Phi_{e,s}^{A_{0,s}\oplus C_s}(x)\downarrow \neq D_s(x)\).
It would be problematic to have a computation certified before comparing it with \(D_s(x)\),
because in such a case, we cannot place a restraint on \(A_0\), which opens the possibility that \(F_e(x)\) could be injured.
By first comparing and then performing the certification procedure, we avoid this problem and ensure that \(F_e(x)\) will not be injured.

Also note that we extend our restraint on \(A_0\) only when \(P_e(x)\) acts.
The restraint we place on \(A_0\) is probably stronger than necessary; moreover, we do not remove the restraint unless the \(P_e(x)\)-requirement is initialized.
This approach simplifies the presentation.

We justify Item~\ref{it:Px_defined}.
\begin{lemma}\label{lem:Px_lemma}
    Suppose \(F_e(x)\) is not injured or initialized after \(s_0\).
    \begin{enumerate}
        \item\label{it:Px_lemma_1} For each \(s>s_0\) with \(\Delta_e^{C_s}(x)\downarrow\), then \(\Phi_{e,s}^{A_{0,s}\oplus C_s}(x)\downarrow=D_s(x)\) if and only if
        \(\Delta_e^{C_s}(x)\downarrow=D_s(x)\).
        \item\label{it:Px_lemma_2} Suppose \(\Delta_e^C(x)\uparrow\), then either \(\Phi_e^{A_0\oplus C}(x)\uparrow\) or \(\Phi_e^{A_0\oplus C}(x)\downarrow \neq D(x)\).
    \end{enumerate}
\end{lemma}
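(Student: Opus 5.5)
Both parts rest on one invariant, which I will establish first: after \(s_0\), every axiom in \(F_e(x)\) whose \(C\)-part \(\sigma\) is still an initial segment of \(C_s\) has its \(A_0\)-part \(\theta\) preserved. Concretely, if \((\theta,\sigma,x,k)\) is enumerated into \(F_e(x)\) at some stage after \(s_0\) and \(C_s\in[\sigma]\) at a later stage \(s\), then \(A_{0,s}\in[\theta]\). This is exactly what ``\(F_e(x)\) is not injured'' gives: \(A_0\res l\) does not change while the computation is certified. Moreover, certified axioms are only enumerated after the check \(\Phi_{e,s}^{A_{0,s}\oplus C_s}(x)\downarrow=D_s(x)=k\), so each diagonalizing axiom \((\theta,\sigma,x,k)\) satisfies \(k=D_{s'}(x)\) at its defining stage \(s'\). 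I will also use that \(\Delta_e^{C_s}(x)\) is defined at \(s\) exactly when some diagonalizing axiom \((\theta,\sigma,x,k)\) with \(C_s\in[\sigma]\) survives; axioms with \(C_t\notin[\sigma]\) are cancelled automatically.

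For (1), fix \(s>s_0\) with \(\Delta_e^{C_s}(x)\downarrow=k\), witnessed by an axiom \((\theta,\sigma,x,k)\) with \(C_s\in[\sigma]\). By the invariant, \(A_{0,s}\in[\theta]\), so the computation \(\Phi_{e,s}^{A_{0,s}\oplus C_s}(x)\) converges with value \(k\) via the same axiom. Here I use monotonicity of the enumeration of \(\Phi\), i.e. that axioms persist once enumerated. Hence
\[
\Phi_{e,s}^{A_{0,s}\oplus C_s}(x)\downarrow=D_s(x)\iff k=D_s(x)\iff \Delta_e^{C_s}(x)=D_s(x).
\]
One subtlety must be handled here: there may be several surviving axioms. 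I would argue that at most one diagonalizing axiom is ever compatible with \(C_s\). A new axiom is only defined when \(\Delta_e^{C_s}(x)\uparrow\), which requires all earlier \(\sigma\)'s to be incompatible with \(C_s\). Since \(C\) is c.e., once \(C_t\notin[\sigma]\) this persists, so the older axiom stays dead.

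For (2), I argue by contraposition. Suppose \(\Phi_e^{A_0\oplus C}(x)\downarrow=D(x)=k\) with use \(l\). Then there is a stage \(s_1>s_0\) after which \(A_0\res l\), \(C\res l\) and \(D(x)\) have all settled, and the computation appears at every later stage. I split into two cases according to whether the current guessing set \(W_j\) (fixed after \(s_0\), since no refresh occurs) has \(\lim_s p(j,s)=1\).
\begin{enumerate}
\item If \(\lim_s p(j,s)=1\), the true \(\sigma=C\res l\) will be enumerated into \(W_j\) at the first stage past \(s_1\) where \(\Delta_e^{C_s}(x)\uparrow\). The computation then gets certified, since \(C_t\in[\sigma]\) forever, so \(\Delta_e^{C}(x)\) becomes defined with a correct \(\sigma\).
\item If \(\lim_s p(j,s)=0\), then \(C\notin[W_j]\). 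But at any stage \(s>s_1\) with \(\Delta_e^{C_s}(x)\uparrow\) we would enumerate \(C_s\res l=C\res l\) into \(W_j\) (step (C.1)), giving \(C\in[W_j]\), a contradiction. Hence \(\Delta_e^{C_s}(x)\downarrow\) at all large \(s\).
\end{enumerate}
In both cases I must pass from ``\(\Delta_e^{C_s}(x)\downarrow\) at cofinally many stages'' to \(\Delta_e^C(x)\downarrow\). Here I use Lemma~\ref{lem:F_bounded}: \(F_e(x)\) is bounded, so there are only boundedly many diagonalizing axioms. The \(\Sigma_1\) map sending each axiom to the stage at which its \(\sigma\) dies is therefore bounded by Lemma~\ref{lem:Friedman}. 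Beyond that bound, any axiom still alive is compatible with \(C\), so \(\Delta_e^C(x)\downarrow\).

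The main obstacle is this last bounding step, which must avoid \(\mathsf{B}\Sigma_2\). It has to be carried out with care for certification stages \(t\) being computed only as a search, and this is the step I would scrutinize first.
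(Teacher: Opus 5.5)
Your part (1) is the paper's argument: the paper proves each direction by contradiction, but the mechanism is the same, namely that \(\theta\) is preserved while \(C_s\in[\sigma]\). Your remark that at most one diagonalizing axiom is compatible with \(C_s\) at any stage fills a point the paper leaves implicit.

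For part (2) you take a different and heavier route. The paper fixes \(s_1\) past which the true computation with axiom \((\theta,\sigma,x,k)\), \(C\in[\sigma]\), is present and agrees with \(D\). It then notes that certifying this computation must succeed: the search in (C.2) can never halt with \(C_t\notin[\sigma]\), so it halts with \(p(j,t)=1\), and hence \(\Delta_e^C(x)\downarrow\). You instead split on \(\lim_s p(j,s)\) and close with Lemma~\ref{lem:F_bounded} plus Lemma~\ref{lem:Friedman} to bound the stages at which wrong axioms die. Your route does buy something: it explicitly handles the possibility, glossed over by the paper, that \(\Delta_e^{C_s}(x)\) is already defined via a wrong \(\sigma'\) at every stage past \(s_1\), so that certification is never run.

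Two simplifications are available, though.

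\emph{The case split is unnecessary.} Your Case~2 is also slightly off: (C.1) enumerates \(\sigma\) only when \(C_s\notin[W_j]\), so \(C\res l\) need not enter \(W_j\). This is harmless, because the observation above gives certification whenever the procedure is run on the true computation.

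\emph{The ``main obstacle'' you flag disappears using your own uniqueness observation.} Suppose \(\Delta_e^{C_s}(x)\downarrow\) at every \(s\ge s_1\). Then no new axiom is created from \(s_1\) on, since creation requires \(\Delta_e^{C_s}(x)\uparrow\). Any axiom alive at a later stage was therefore already alive at \(s_1\), because deaths persist as \(C\) is c.e. By uniqueness it is the single axiom alive at \(s_1\). That axiom never dies, so \(C\in[\sigma']\) and \(\Delta_e^C(x)\downarrow\). No appeal to Lemma~\ref{lem:F_bounded} or Friedman's lemma is needed.

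Your bounding version is nevertheless valid in \(\mathsf{I}\Sigma_1\), since the death-stage map is partial computable on an \(M\)-finite set.
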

\begin{proof}
    \textit{ (1):}
    \((\Rightarrow)\)
    Suppose towards a contradiction that \(\Delta_e^{\sigma}(x)\downarrow \neq D_s(x)\) with \(C_s\in [\sigma]\).

    Let \((\theta,\sigma,x,k)\) be the diagonalizing axiom claimed in Item~\ref{it:Px_certified} at a stage \(t\) with \(s_0<t<s\).
    Since \(A_0\res t\) is preserved and \(C_s\in [\sigma]\), we still have \(\Phi_{e,s}^{A_{0,s}\oplus C_s}(x)\downarrow = \Delta_e^{\sigma}(x)=k\).
    Since \(D_s(x)\neq k\), we have \(\Phi_{e,s}^{A_{0,s}\oplus C_s}(x)\downarrow \neq D_s(x)\).
    A contradiction.

    \((\Leftarrow)\) Suppose \(\Delta_e^{\sigma}(x)\downarrow = D_s(x)\) with \(C_s\in [\sigma]\).
    Let \((\theta,\sigma,x,k)\) be the diagonalizing axiom claimed in Item~\ref{it:Px_certified} at a stage \(t\) with \(s_0<t\le s\).
    Since \(A_0\res t\) is preserved and \(C_s\in [\sigma]\), we still have \(\Phi_{e,s}^{A_{0,s}\oplus C_s}(x)\downarrow = k = \Delta_e^{\sigma}(x)= D_s(x)\).

    \textit{(2):}
    Suppose towards a contradiction that \(\Phi_e^{A_0\oplus C}(x)\downarrow = D(x)\).
    There exists some stage \(s_1>s_0\) and quadruple \((\theta,\sigma,x,k)\) such that for each \(s\ge s_1\),
    this quadruple is the axiom of the computation \(\Phi_{e,s}^{A_{0,s}\oplus C_s}(x)\downarrow = k = D_s(x) = D(x)\).
    Therefore, we should perform the certification procedure for this computation at stage \(s_1\).
    Since \(C\in [\sigma]\), this computation is certified at \(s_1\).
    Therefore, \(\Delta_e^{C}(x)\downarrow=k\), contradicting the assumption that \(\Delta_e^{C}(x)\uparrow\).
\end{proof}

In other words, even before we run the \(P_e(x)\)-strategy, the following observations are valid.
If we observe that \(\Delta_e^{C_s}(x)\downarrow\neq D_s(x)\), then we know that \(\Phi_{e,s}^{A_{0,s}\oplus C_s}(x)\downarrow\neq D_s(x)\),
and therefore the \(P_e(x)\)-requirement is satisfied at stage \(s\).
If \(\Delta_e^{C_s}(x)\downarrow=D_s(x)\), then we know that Item~\ref{it:Px_defined} happens.

\textbf{\(P_e\)-strategy: }
At stage \(s\), start with \(x=0\);
\begin{enumerate}
    \item If \(x=s\), we stop;
    \item Perform the \(P_e(x)\)-strategy.
    If Item~\ref{it:Px_defined} or Item~\ref{it:Px_certified} happens
    (in the latter case, recall from \(P_e(x)\)-strategy that \(P_e(x)\) acts at stage \(s\) and we also say \(P_e\) \emph{acts} at stage \(s\)),
    we continue with \(x:=x+1\);
    otherwise, we define \(\tau_s(e)=x\) and stop.
\end{enumerate}
For each \(x<\tau_s(e)\), \(\Delta_e^{C_s}(x)\downarrow=D_s(x)\).

The following analogue of Lemma~\ref{lem:single_requirement} is a special case of Lemma~\ref{lem:Robinson_block_of_requirements}.
The proof given here serves as a warm-up; some of its arguments do not apply to the block case.
\begin{lemma}\label{lem:Robinson_single_requirement_warmup}
    Suppose the \(P_e\)-requirement is not initialized after \(s_0\).
    Then \(P_e\) is satisfied.
    Moreover, there exists some stage \(s_2>s_0\) after which \(P_e\) does not act.
    The restraint put on \(A_0\) is therefore bounded by \(s_2\).
\end{lemma}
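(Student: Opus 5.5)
The plan is to imitate the proof of Lemma~\ref{lem:block_of_requirements}, with \(D\) computable replaced by \(D\le_T C\), using that \(C\) is hyperregular (Lemma~\ref{lem:superlow_hyperregular}) so that \(\mathsf{I}\Sigma_1^C\) holds. Throughout, fix \(s_0\) after which \(P_e\) is not initialized.

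\textbf{Step 1: no \(F_e(x)\) is injured after \(s_0\).} Restraint is imposed on \(A_0\res s\) whenever \(P_e(x)\) acts. Since \(P_e\) is not initialized, the only way to change \(A_0\res l\) below a certified computation would be an enumeration by a higher block. That is excluded by the hypothesis, so I will argue that the certified computations of \(F_e(x)\) are preserved. Lemma~\ref{lem:Px_lemma} and Lemma~\ref{lem:F_bounded} then apply to every \(x\).

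\textbf{Step 2: \(P_e\) is satisfied, and the length is bounded.} Let \(\tau(e)\) denote the eventual behaviour of \(\tau_s(e)\). Suppose \(D=\Phi_e^{A_0\oplus C}\). By Lemma~\ref{lem:Px_lemma}(2), \(\Delta_e^C(x)\downarrow\) for every \(x\), and by Lemma~\ref{lem:Px_lemma}(1) its value equals \(D(x)\). More precisely, \(\Delta_e^C(x)\) is the value of an axiom \((\theta,\sigma,x,k)\) with \(C\in[\sigma]\), and the computation \(\Phi_e\) with \(A_0\in[\theta]\) is permanent, so \(k=D(x)\). Hence \(D\le_T C\), a contradiction. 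So some \(x\) has \(\Delta_e^C(x)\uparrow\) or \(\Delta_e^C(x)\neq D(x)\). Either way \(\Phi_e^{A_0\oplus C}(x)\neq D(x)\), and \(P_e\) is satisfied.

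\textbf{Step 3: bounding the stages at which \(P_e\) acts.} This is the main obstacle. Let \(x_0\) be least with \(\Delta_e^C(x_0)\neq D(x_0)\) or \(\Delta_e^C(x_0)\uparrow\). Its existence follows by \(\mathsf{L}\Sigma_1\) relative to \(C\oplus D\), or more carefully from a \(\Pi_1^C\)/\(\Sigma_1^C\) formulation available through \(\mathsf{I}\Sigma_1^C\). At late stages the \(P_e\)-strategy never passes beyond roughly \(x_0\), because \(\Delta_e^{C_s}(x_0)\) is eventually wrong or undefined. So actions occur only at \(x\le x_0\).

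For each \(x\le x_0\), \(F_e(x)\) is \(M\)-finite by Lemma~\ref{lem:F_bounded}. Each action of \(P_e(x)\) enumerates a new axiom into \(F_e(x)\), since certification at a stage enumerates the axiom. Define the partial computable map \(g(x,n)=\) the stage at which the \(n\)-th axiom enters \(F_e(x)\) after \(s_0\). The number of axioms per \(x\) is not given a uniform bound, so I would bound it using the \(\omega\)-c.e.\ approximation: each new certification with the same \(j\) requires \(p(j,\cdot)\) to return to \(1\) after a failed guess \(C_t\notin[\sigma]\). Here \(p(j,\cdot)\) must drop from \(1\) and rise again when the previous \(\sigma\) is refuted, since \(W_j\) gains new strings. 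So the number of certifications at \(x\) is at most \(q(j_x)+1\), where \(j_x\) is the final index. The \(j_x\) for \(x\le x_0\) form a bounded set, by Lemma~\ref{lem:Friedman} applied to the map from \(x\) to its last refresh stage, given that no refreshes occur after \(s_0\). Then \(\max_{x\le x_0} q(j_x)\) exists by \(\mathsf{I}\Sigma_1\), so \(g\) has bounded domain. Lemma~\ref{lem:Friedman} then gives a bound \(s_2\) on \(\ran g\). After \(s_2\) the strategy \(P_e\) never acts, and its restraint, extended only upon acting, is bounded by \(s_2\).

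The delicate points are the \(\mathsf{L}\Sigma_1\)-style choice of \(x_0\) and the claim that each recertification costs a change of \(p(j,\cdot)\). I would verify the latter from the certification procedure (C.1)--(C.2), checking that the \(\sigma\) enumerated into \(W_j\) is refuted only by a \(C\)-change.
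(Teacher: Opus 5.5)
There is a genuine gap in Step 3, at the point you flag as delicate. You claim that each recertification of \(P_e(x)\) with the same guessing index \(j\) forces a change of \(p(j,\cdot)\), so that \(F_e(x)\) receives at most \(q(j_x)+1\) axioms. This is false. The function \(p\) is an arbitrary \(\omega\)-c.e.\ approximation to \(X\) (where \(j\in X\iff C\in[W_j]\)), fixed in advance. It only has to converge to the final truth value; it need not reflect whether \(C_s\in[W_{j,s}]\) at intermediate stages. If \(X(j)=1\), then \(p(j,\cdot)\) may equal \(1\) from some early stage on. Every later attempt at \(x\) is then certified immediately, since the \(t\) of (C.2) is \(s\) itself. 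Meanwhile \(C\) may refute the certified strings arbitrarily many times before a correct string enters \(W_j\), because each refutation needs only a \(C\)-change below the use, and \(p\) never moves. So each \(F_e(x)\) is \(M\)-finite by Lemma~\ref{lem:F_bounded}, but you have no bound uniform in \(x\le x_0\). Extracting one from the individual bounds is an instance of \(\mathsf{B}\Pi_1\) (i.e.\ \(\mathsf{B}\Sigma_2\)), which is not available. Hence \(\dom g\) is not shown to be bounded, and Lemma~\ref{lem:Friedman} cannot be applied to \(g\).

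The missing ingredient is hyperregularity of \(C\), which you announce at the outset but never use in Step 3. For \(x<x_0\) you know \(\Delta_e^C(x)\downarrow\). So \(w(x)=\) the stage at which \(\Delta_e^\sigma(x)\) is defined with \(C\in[\sigma]\) is a \(C\)-computable function on the bounded domain \(x_0\). By \(\mathsf{I}\Sigma_1^C\) (Lemma~\ref{lem:superlow_hyperregular}) its range is bounded by some \(s_1\). Since \(C\) is c.e., the defining \(\sigma\) stays an initial segment of \(C_s\) afterwards, so no \(P_e(x)\) with \(x<x_0\) acts after \(s_1\). The single point \(x_0\) needs no uniformity:
\begin{itemize}
\item If \(\Delta_e^C(x_0)\downarrow\ne D(x_0)\), Lemma~\ref{lem:Px_lemma}(1) makes the strategy halt at \(x_0\) without acting from some stage on.
\item If \(\Delta_e^C(x_0)\uparrow\), then \(\lim_s p(j_{x_0},s)=0\), and nothing is certified after the last change of \(p(j_{x_0},\cdot)\). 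The finitely many wrong strings in \(F_e(x_0)\) are refuted by some stage (Lemma~\ref{lem:Friedman}), after which the strategy again halts at \(x_0\).
\end{itemize}
This is the paper's argument. The bound \(q\) is used only in the block case, and there it bounds the last change of \(p(j_e,\cdot)\) when the limit is \(0\), not the number of certifications.

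A smaller point: your choice of \(x_0\) as the least element of \(\{x:\Delta_e^C(x)\downarrow\ne D(x)\}\cup\{x:\Delta_e^C(x)\uparrow\}\) is the right one, since it guarantees \(\Delta_e^C(x)\downarrow\) below \(x_0\). But its existence does not follow from \(\mathsf{L}\Sigma_1\) relative to \(C\oplus D\), because the second set is \(\Pi_1^C\). Instead, take the least element of the \(\Pi_1^C\) part by \(\mathsf{L}\Pi_1^C\) (if it is nonempty), and then the least element of the \(\Sigma_1^C\) part below it by \(\mathsf{L}\Sigma_1^C\).
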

\begin{proof}
    The set \(\mathcal{A} = \{x \mid \Delta_e^C(x)\downarrow \neq D(x)\}\) is \(\Sigma_1^C\)
    because \(x \in \mathcal{A}\) if and only if there exists a stage \(s\) and a string \(\sigma\)
    such that \(\Delta_{e,s}^\sigma(x) \neq D_s(x)\) and \(C \in [\sigma]\).
    Here, we use the fact that \(D\) is c.e.,
    and that whenever we define \(\Delta_e^\sigma(x)\), we define it correctly, which implies that \(D_s(x) = D(x)\) for such stage \(s\).

    If \(\mathcal{A}\) is nonempty, let \(a\) be any element in \(\mathcal{A}\) (or the least element in \(\mathcal{A}\) by \(\mathsf{L}\Sigma_1^C\)).
    If \(\mathcal{A}\) is empty, then \(\dom \Delta_e^C\neq M\) (otherwise, \(C\ge_T D\) yields a contradiction);
    we choose \(a\) to be the least element such that \(\Delta_e^C(a)\uparrow\), by \(\mathsf{L}\Pi_1^C\).

    For \(x<a\), we have \(\Delta_e^C(x)\downarrow\) (not necessarily \(=D(x)\)).
    Define the \(C\)-computable function \(w:a\to M\) by letting \(w(x)=s\) if \(\Delta_{e,s}^\sigma(x)\downarrow\) for some \(C\in [\sigma]\).
    Using \(\mathsf{I}\Sigma_1^C\) (in fact, \(\mathsf{B}\Sigma_1^C\) suffices), \(\ran w\) is bounded by some \(s_1>s_0\).
    That is, for each \(x<a\) and each \(s>s_1\), \(P_e(x)\) does not act at stage \(s\).

    \textit{Case (1):} \(\Delta_e^C(a)\downarrow\neq D(a)\).
    There exists some stage \(s_2\ge s_1\) and a string \(C\in [\sigma]\) such that \(\Delta_{e,s_2}^\sigma(a)\downarrow \neq D_{s_2}(a)=D(a)\).
    Then for all \(s>s_2\), we have \(\Delta_{e,s}^{C_s}(a)\downarrow \neq D_s(a)=D(a)\).
    By Lemma~\ref{lem:Px_lemma}(\ref{it:Px_lemma_1}), we have \(\Phi_{e,s}^{A_{0,s}\oplus C_s}(a)\downarrow \neq D_s(a)\)
    and hence \(\Phi_e^{A_0\oplus C}(a)\downarrow \neq D(a)\).
    Thus, for all \(s>s_2\), the \(P_e\)-requirement is satisfied.

    \textit{Case (2):} \(\Delta_e^C(a)\uparrow\).
    By Lemma~\ref{lem:Px_lemma}(\ref{it:Px_lemma_2}), \(P_e(a)\)-requirement is satisfied.
    Let \(W_j\) be the guessing set for \(P_e(a)\)-strategy.
    Since \(F_e(x)\) is bounded, there exists some stage \(s_2>s_1\) such that for each \(s>s_2\), no computation is certified and also \(\Delta_e^{C_s}(a)\uparrow\).
    Consequently, \(P_e(a)\) does not act at stage \(s>s_2\).
\end{proof}

We remark that the argument used in \textit{Case (2)} of the preceding proof does not extend to the block case (Lemma~\ref{lem:Robinson_block_of_requirements}).
The reason is that the finiteness behavior of \(F_e(x)\) is no longer controllable.

\subsection{Block of requirements}\label{sec:Robinson_block_of_requirements}
Let \(\Lambda\) be a block of \(P\)-requirements.
Assume \(\Lambda=\{P_{e_0}, P_{e_0+1}, \ldots, P_{e_1}\}\).
To simplify notation, we sometimes identify \(\Lambda\) with the index set \(\{e_0, e_0+1, \ldots, e_1\}\).
Each of the requirements in \(\Lambda\) follows the \(P_e\)-strategy, and they have no conflicts with each other.
Now we shall prove the analogue of Lemma~\ref{lem:block_of_requirements}:
\begin{lemma}\label{lem:Robinson_block_of_requirements}
Suppose there exists some \(s_0\) after which \(\Lambda\) is not initialized.
Then all requirements in \(\Lambda\) are satisfied, and there exists some \(s_3 > s_0\) after which no requirement in \(\Lambda\) acts.
Moreover, the restraint put on \(A_0\) is bounded by \(s_3\).
\end{lemma}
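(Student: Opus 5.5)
The plan is to follow the warm-up proof of Lemma~\ref{lem:Robinson_single_requirement_warmup} uniformly in \(e\in\Lambda\). Where the warm-up used \(\mathsf{L}\Sigma_1^C\) or \(\mathsf{L}\Pi_1^C\) separately for each \(e\), I will instead get uniform bounds from two facts: \(C\) is hyperregular (Lemma~\ref{lem:superlow_hyperregular}, so \(\mathsf{I}\Sigma_1^C\) holds and every \(W^C\) is regular by Lemma~\ref{lem:hyperregular}), and \(X\) is \(\omega\)-c.e.

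\textbf{Step 1: separate the requirements that visibly diagonalize.} For \(e\in\Lambda\), let \(\mathcal{A}_e=\{x\mid \Delta_e^C(x)\downarrow\neq D(x)\}\). As in the warm-up, this set is \(\Sigma_1^C\), using that \(D\) is c.e.\ and that \(\Delta_e\) is defined correctly at the stage of definition. Hence \(E=\{e\in\Lambda\mid \mathcal{A}_e\neq\varnothing\}\) is a bounded \(\Sigma_1^C\) set. By regularity of \(\Sigma_1^C\) sets it is \(M\)-finite, so \(\Lambda\setminus E\) is also coded.

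For \(e\in E\), let \(a_e\) be a witness in \(\mathcal{A}_e\). The map \(e\mapsto(a_e,\text{stage},\sigma)\) is partial \(C\)-computable on the coded set \(E\), so its range is bounded by hyperregularity.

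For \(e\notin E\), every defined value of \(\Delta_e^C\) is correct. Let \(a_e\) be the least \(x\) with \(\Delta_e^C(x)\uparrow\). Such an \(x\) exists, since otherwise \(D\le_T C\).

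\textbf{Step 2: bound the \(a_e\) uniformly.} I claim \(\{a_e\mid e\in\Lambda\setminus E\}\) is bounded. Suppose not. Then for every \(x\) there is some \(e\in\Lambda\setminus E\) with \(\Delta_e^C(x)\downarrow\). A \(C\)-computation of \(D\) follows: given \(x\), search the coded finite set \(\Lambda\setminus E\) for an \(e\), a stage and a string \(\sigma\) with \(C\in[\sigma]\) and \(\Delta^\sigma_{e}(x)\downarrow\), and output that value. This contradicts \(D\nleq_T C\).

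So some \(b\) bounds all \(a_e\). Now define the partial \(C\)-computable function \(w(e,x)\) for \(e\in\Lambda\), \(x<a_e\le b\), giving the stage at which \(\Delta_e^C(x)\) is defined via some \(\sigma\) with \(C\in[\sigma]\). By hyperregularity its range is bounded by some \(s_1>s_0\). After \(s_1\), no \(P_e(x)\) with \(x<a_e\) acts. For \(e\in E\), using Lemma~\ref{lem:Px_lemma}(\ref{it:Px_lemma_1}) as in Case~(1) of the warm-up, \(P_e\) is satisfied and never acts again after \(s_1\). For \(e\notin E\), \(P_e(a_e)\) is satisfied by Lemma~\ref{lem:Px_lemma}(\ref{it:Px_lemma_2}).

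\textbf{Step 3: the main obstacle.} It remains to bound uniformly over \(e\in\Lambda\setminus E\) the stages at which \(P_e(a_e)\) still acts. Such actions are temporary definitions \(\Delta_e^\sigma(a_e)\) that are later cancelled by \(C\)-changes. Case~(2) of the warm-up bounded \(F_e(a_e)\) one \(e\) at a time, which uses \(\Sigma_2\) information per \(e\) and does not combine over the block.

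Here I use that \(X\) is \(\omega\)-c.e. Since \(F_e(a_e)\) is never injured or initialized after \(s_0\), each \(e\) has a fixed guessing index \(j_e\). Consider the set of pairs \((e,i)\) with \(i\le q(j_e)\) such that \(p(j_e,\cdot)\) changes at least \(i\) times. This set is \(\Sigma_1\) and bounded, because \(\max_e q(j_e)\) exists by \(\mathsf{I}\Sigma_1\). Hence it is \(M\)-finite. From its code we read off the exact number of changes for each \(e\). The stage of the last change is then a partial computable function on a bounded domain, so it is bounded by Lemma~\ref{lem:Friedman}, say by \(s_2\).

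If \(\lim_s p(j_e,s)=0\), nothing is certified for \(P_e(a_e)\) after \(s_2\). If the limit is \(1\), then \(C\in[W_{j_e}]\), and the argument of Lemma~\ref{lem:F_bounded} together with Lemma~\ref{lem:Px_lemma}(\ref{it:Px_lemma_2}) would make \(\Delta_e^C(a_e)\) defined, contradicting the choice of \(a_e\). So that case is vacuous.

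Finally, take \(s_3=\max(s_1,s_2)\), enlarged if needed to cover the finitely many restraints already placed. The restraint imposed by \(\Lambda\) only grows when some \(P_e\) acts, so after \(s_3\) no requirement in \(\Lambda\) acts and the restraint on \(A_0\) is bounded by \(s_3\). I expect Step~3 to be the delicate point: the key is to ensure the stage-of-last-change function is genuinely \(\Sigma_1\) with a coded domain. This is exactly where \(C'\) being \(\omega\)-c.e., and not merely \(\Delta_2\), is needed.
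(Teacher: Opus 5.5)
Your plan has the same three stages as the paper's proof. First, split off the \(e\) for which a wrong value of \(\Delta_e^C\) becomes \(C\)-visible; your \(E\) is the paper's \(\dom f\). Second, use \(D\nleq_T C\) and hyperregularity to bound where and when \(\Delta_e^C\) is defined (the paper's \(b\) and \(g\)); bounding the least undefined points \(a_e\) is a tidier route to the contradiction. Third, finish with \(q\) and Lemma~\ref{lem:Friedman} applied to the change stages of \(p(j_e,\cdot)\) (the paper's \(w\)).

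The gap is in Step~3, exactly where you place the difficulty. Your \(a_e\) is the least \(x\) with \(\Delta_e^C(x)\uparrow\), which is a \(\Pi_1^C\) description, and \(j_e\) is the guessing index of \(P_e(a_e)\). Nothing you say makes \(e\mapsto j_e\) computable. So the set of pairs \((e,i)\) is not shown to be \(\Sigma_1\), the existence of \(\max_e q(j_e)\) does not follow from \(\mathsf{I}\Sigma_1\), and Lemma~\ref{lem:Friedman} does not apply to your last-change function as stated.

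The paper avoids this with a partial computable function: \(a(e)\) is the \(x\) at which \(P_e\) is seen to act at the first stage after the bound for \(g\). Its domain \(\Lambda_2\) is a bounded \(\Sigma_1\) set, hence coded. On \(\Lambda_2\), both \(a\) and \(e\mapsto j_e\) are computable, and requirements outside \(\Lambda_2\) never act again.

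Within your setup there are two other repairs. You could note that once your \(s_1\) bounds \(w\) on all of \(\Lambda\times(b+1)\), whether \(\Delta_e^C(x)\downarrow\) for \(x\le b\) is decided by stages \(\le s_1\) together with \(C\). Then \(e\mapsto a_e\) is \(C\)-computable and hence coded by \(\mathsf{I}\Sigma_1^C\). More simply, run your change count over all \((e,x)\in\Lambda\times(b+1)\). Their guessing indices are fixed after \(s_0\) and computable, so \(a_e\) never has to be located.

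Second, ``nothing is certified for \(P_e(a_e)\) after \(s_2\)'' does not by itself give ``no \(P_e\) acts after \(s_3\)''. A definition \(\Delta_e^\sigma(a_e)\) certified earlier, with \(C\notin[\sigma]\), survives as long as \(C_s\in[\sigma]\). That can last well beyond the point where \(p(j_e,\cdot)\) has settled at \(0\). Meanwhile the \(P_e\)-strategy can pass \(a_e\) via (Px.1), and \(P_e(x)\) for \(x>a_e\) can still certify, act, and extend the restraint.

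One more application of Lemma~\ref{lem:Friedman} closes this. Consider the coded set of triples \((e,x,\sigma)\) with \(x\le b\) and \(\Delta_e^\sigma(x)\) defined by your stage \(s_2\). The stage at which \(C\) leaves \([\sigma]\) is a partial computable function on this set, so it has bounded range. Beyond that bound, \(\Delta_e^{C_s}(a_e)\uparrow\), and \(P_e\) halts at \(a_e\) without acting.

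This is the clause ``and also \(\Delta_e^{C_s}(a)\uparrow\)'' in Case~(2) of the warm-up. The paper's block proof is equally silent on it, so it should be made explicit in either version.
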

\begin{proof}
    We define \(f:\Lambda\to M\) by
    \[
        f(e)=s \text{ if } s>s_0 \land \exists x \exists \sigma \left(\Delta_{e,s}^\sigma(x)\downarrow \neq D_s(x) \land C\in [\sigma]\right),
    \]
    where the \(x\) in the statement is not necessarily the least one. For such \(x\) and \(s\), we also have \(D_s(x)=D(x)\) as \(D\) is a c.e.\ set.
    Since \(f\) is a partial \(C\)-computable function, \(\ran f\) is bounded by some \(s_1\).
    That is, for each \(e\in \dom f\) and each stage \(s>s_1\), \(P_e\) does not act at stage \(s\).
    As \(\dom f\) is a \(\Sigma_1^C\) subset of \(\Lambda\), \(\dom f\) is \(M\)-finite.
    We let \(\Lambda_1 = \Lambda \setminus\dom f\).

    Then we claim that \(\bigcup_{e\in \Lambda_1} \dom \Delta_e^C \neq M\).
    To see this, suppose otherwise.
    Then for any \(x\in M\), we can search for an \(e\in \Lambda_1\), a string \(\sigma\), and a stage \(s>s_1\)
    such that \(\Delta_e^{\sigma}(x)=D_s(x)\) is defined at stage \(s\) and \(C\in[\sigma]\), and output \(D(x)=D_s(x)\).
    This procedure shows that \(D\le_T C\), contradicting the assumption that \(D\nleq_T C\).
    Therefore, \(\bigcup_{e\in \Lambda_1} \dom \Delta_e^C\) is bounded by some \(b\).

    Define \(g:\Lambda_1\times b\to M\) by
    \[
        g(e,x)=s \text{ if } s>s_1\land \exists \sigma \left(\Delta_{e,s}^\sigma(x)\downarrow \land C\in [\sigma]\right).
    \]
    So \(\ran g\) is bounded by some \(s_2\).
    This means that for each \(e\in \Lambda_1\) and for each \(x\in \dom \Delta_e^C\), \(P_e(x)\) does not act at stage \(s>s_2\).

    Next we attempt to locate, for each \(e\in \Lambda_1\), the least element \(a_e\) such that \(\Delta_e^C(a_e)\uparrow\).
    \(P_e(a_e)\) may still act after \(s_2\).

    We define the partial computable function \(a:\Lambda_1\to M\) by
    \[
        a(e)=x \text{ if we observe that \(P_e(x)\) acts at some (least) stage \(s>s_2\)}.
    \]
    Note that for such \(x\), we must have \(\Delta_e^C(x)\uparrow\), for otherwise this contradicts the choice of \(s_2\).
    Note that for \(e\in \Lambda_1\setminus \dom a\), \(P_e\) does not act at stage \(s>s_2\).
    Let \(\Lambda_2 = \dom a\).
    Then \(\Lambda_2\) is \(M\)-finite, and \(a\) is total computable on \(\Lambda_2\).
    We let \(a_e=a(e)\).

    For each \(e\in \Lambda_2\), we let \(W_{j_e}\) be the guessing set for the \(P_e(a_e)\)-strategy.
    The index \(j_e\) can be computed directly for \(e\in \Lambda_2\).
    Since \(\Delta_e^C(a_e)\uparrow\), we have \(\lim_s p(j_e,s) = 0\).
    Let \(s_e\) be such that for each \(s>s_e\), \(p(j_e,s) = 0\).
    This means that for each \(s>s_e\), no computation is certified and therefore \(P_e(a_e)\) does not act at stage \(s>s_e\).
    Now we shall find a bound for \(\{s_e\mid e\in \Lambda_2\}\).

    Note that \(q:\{j_e \mid e\in \Lambda_2\}\to M\) is computable (recall that \(q(j_e)\) bounds the number of \(\{s\mid p(j_e,s+1)\neq p(j_e,s)\}\)).
    Therefore, \(\ran q\) is bounded by some \(d\).
    Define
    \(w:\Lambda_2\times d\to M\) by
    \[
    w(e,k-1)=s \text{ if } p(j_e,s+1)\neq p(j_e,s)\text{ for the }k\text{-th time.}
    \]
    Then \(\ran w\) is bounded by some \(s_3\).
    That is, for each \(s>s_3\), \(p(j_e,s)=0\), and \(s_3\) is a bound for \(\{s_e\mid e\in \Lambda_2\}\).

    Putting this together, we have that for each \(e\in \Lambda\), \(P_e\) does not act at any stage \(s>s_3\).
\end{proof}

Dynamic priority assignments are the same as in Section~\ref{sec:SST_dynamic_assignments}.
So we move directly to the construction and verification, which is analogous to Section~\ref{sec:SST_construction}.

\subsection{Construction and verification}\label{sec:Robinson_construction_and_verification}
Most parts remain exactly the same.
Here, we only rephrase Part II.

\textbf{Part II. Block strategy:}
Let the block \(\Lambda(0)\) be eligible to act.
Suppose, without loss of generality, that the block \(\Lambda(i)\) is eligible to act.
For each \(P_e\in \Lambda(i)\), we perform the \(P_e\)-strategy, presented in Section~\ref{sec:Robinson_single_requirement}.
If some requirement acts, we initialize lower-priority blocks and start Part III;
otherwise, we let the block with the next highest priority be eligible to act
until we reach a block that contains the \(P_s\)-requirement or the \(Q_s\)-requirement,
at which point we start Part III.

This completes the construction.

Lemma~\ref{lem:block_not_initialized} and Lemma~\ref{lem:SST_satisfied} still hold,
with Lemma~\ref{lem:block_of_requirements} replaced by Lemma~\ref{lem:Robinson_block_of_requirements}.
This completes the verification.

\section{Discussion}\label{sec:discussion}
We point out that the original Robinson Splitting Theorem can be proved in models of \(\mathsf{P}^-+\mathsf{B}\Sigma_2\). This is not surprising, since \(\mathsf{B}\Sigma_2\) even implies Sacks Density Theorem~\cite{RN320}.
Recall that~\cite{RN365,RN320} shows that, under \(\mathsf{B}\Sigma_2\), every incomplete c.e.\ set is hyperregular.
Therefore, most of our proof applies to an arbitrary low set \(C\).
The only additional issue is that the proof of Lemma~\ref{lem:Robinson_block_of_requirements} in our current form also uses that \(X\) is \(\omega\)-c.e.\@, where \(X\) is the set for Robinson's trick.
Under \(\mathsf{B}\Sigma_2\), however, this lemma has a much shorter proof:
define \(f:\Lambda\to M\) by
\[
    f(e)=s \text{ if } \forall t>s\left(P_e \text{ does not act at stage }t\right)
\]
This is a \(\Pi_1\)-definable function, and \(\mathsf{B}\Pi_1\) (equivalent to \(\mathsf{B}\Sigma_2\)) implies that \(\ran f\) is bounded.
This yields Lemma~\ref{lem:Robinson_block_of_requirements}.
Hence we obtain the following theorem:
\begin{theorem}
    Robinson Splitting Theorem (Theorem~\ref{thm:robinson}) holds in models of \(\mathsf{P}^-+\mathsf{B}\Sigma_2\).
\end{theorem}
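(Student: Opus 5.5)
The statement to prove is that the Robinson Splitting Theorem holds in models of \(\mathsf{P}^-+\mathsf{B}\Sigma_2\). Fix \(M\models\mathsf{P}^-+\mathsf{B}\Sigma_2\) and c.e.\ sets \(B,C,D\) with \(C\) low and \(D\nleq_T C\); taking \(D=B\) gives the theorem. I would run exactly the construction of Section~\ref{sec:robinson}: the same requirements \(P_e,Q_e\), the same certification procedure with guessing sets \(W_j\), the same blocks and the same dynamic priority assignments. The verification must then be checked line by line, recording which earlier steps used superlowness of \(C\) and replacing each such use.

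\textbf{Step 1: hyperregularity of \(C\).}
\begin{itemize}
\item Superlowness was used in two ways: to get hyperregularity of \(C\), i.e.\ \(\mathsf{I}\Sigma_1^C\) via Lemma~\ref{lem:hyperregular}, and to get that \(X\) is \(\omega\)-c.e.
\item Since \(C\) is low and \(M\) is not a model of \(\mathsf{I}\Sigma_2\) in general, \(C\) is incomplete whenever \(\varnothing'\nleq_T C\). That holds because \(C'\le_T\varnothing'\) and \(\varnothing''\nleq_T\varnothing'\) still hold in \(M\).
\item By the cited result of~\cite{RN365,RN320}, every incomplete c.e.\ set is hyperregular under \(\mathsf{B}\Sigma_2\). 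Hence \(\mathsf{I}\Sigma_1^C\) and \(\mathsf{B}\Sigma_1^C\) are available.
\end{itemize}

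\textbf{Step 2: the guessing set \(X\).}
\begin{itemize}
\item Since \(C\) is low, \(X\le_1 C'\le_T\varnothing'\), so \(X\) is \(\Delta_2\).
\item By the limit lemma (available under \(\mathsf{I}\Sigma_1\)) there is a computable \(p\) with \(X(j)=\lim_s p(j,s)\). That is all the certification step \ref{it:certify_check} needs.
\item Lemmas~\ref{lem:F_bounded} and~\ref{lem:Px_lemma} only use this limit, never the change bound \(q\), so they go through unchanged.
\end{itemize}

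\textbf{Step 3: Lemma~\ref{lem:Robinson_block_of_requirements}, the main obstacle.} The old proof bounded the stabilisation stages \(s_e\) using the computable bound \(q\), and that bound is no longer available. Instead I would argue as sketched in the discussion.
\begin{itemize}
\item For each \(e\in\Lambda\), Lemma~\ref{lem:Robinson_single_requirement_warmup} shows that \(P_e\) stops acting after some stage. That proof used only \(\mathsf{I}\Sigma_1^C\) and the boundedness of \(F_e(a)\), so it still applies.
\item The relation \(R(e,s)\) defined by ``\(\forall t>s\,(P_e\) does not act at stage \(t)\)'' is \(\Pi_1\), and \(\forall e\in\Lambda\,\exists s\,R(e,s)\) holds.
\item \(\Lambda\) is \(M\)-finite, so \(\mathsf{B}\Pi_1\), which is equivalent to \(\mathsf{B}\Sigma_2\) by Theorem~\ref{thm:inductive_hierarchy}, gives a single bound \(s_3\) with \(\forall e\in\Lambda\,\exists s<s_3\,R(e,s)\).
\item Satisfaction of each requirement in \(\Lambda\) also comes from the warm-up lemma. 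The restraint is then bounded by \(s_3\).
\end{itemize}
I expect the delicate points to be two checks. First, the warm-up lemma's appeal to \(\mathsf{L}\Pi_1^C\) and to the boundedness of the function \(w\) really needs only hyperregularity. Second, the \(\Pi_1\) definition of acting is uniform in \(e\).

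\textbf{Step 4: the rest of the verification.} With this version of Lemma~\ref{lem:Robinson_block_of_requirements} in hand, Lemmas~\ref{lem:final_block_order_exists}, \ref{lem:block_not_initialized} and~\ref{lem:SST_satisfied} go through verbatim, since they use only \(\mathsf{I}\Sigma_1\). Finally, \(D\nleq_T A_i\oplus C\) for each \(i<2\) yields \(\bm a_i>\bm c\), \(\bm a_i\) incomparable, and \(\bm b=\bm a_0\vee\bm a_1\).
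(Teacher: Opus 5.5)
Your proposal is correct and follows the paper's own argument. The paper obtains hyperregularity of \(C\) from the \(\mathsf{B}\Sigma_2\) result on incomplete c.e.\ sets, needs only a limit approximation of \(X\) for certification, and reproves Lemma~\ref{lem:Robinson_block_of_requirements} by applying \(\mathsf{B}\Pi_1\) to the \(\Pi_1\) relation ``\(P_e\) does not act after stage \(s\)''; you also correctly derive this relation's totality on \(\Lambda\) from Lemma~\ref{lem:Robinson_single_requirement_warmup}, which the paper leaves implicit. One small fix: to get \(\bm a_i>\bm c\) literally, take \(\bm a_i\) to be the degree of \(A_i\oplus C\) rather than of \(A_i\).
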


We leave the following question open:
\begin{question}\label{ques:superlow}
Does \(\mathsf{I}\Sigma_1\) prove the original Robinson Splitting Theorem when the set \(C\) is low? Or is it equivalent to \(\mathsf{B}\Sigma_2\) over \(\mathsf{P}^-+\mathsf{I}\Sigma_1\)?
\end{question}
Our argument relies heavily on the fact that \(X\) and \(C'\) are \(\omega\)-c.e.\@.
For a general low set \(C\), the jump \(C'\) may be \(\alpha\)-c.e.\ for some computable ordinal \(\alpha>\omega\),
which could create additional difficulties for constructions formalized in \(\mathsf{I}\Sigma_1\).
This suggests that one must treat lowness more carefully over \(\mathsf{P}^-+\mathsf{I}\Sigma_1\).
On the other hand, it would still be surprising if a finite-injury argument failed in \(\mathsf{I}\Sigma_1\).

This question may also be related to the following special case of the Sacks Density Theorem:
\begin{question}
    Does \(\mathsf{I}\Sigma_1\) prove the upper density property:
    For every c.e.\ degree \(C<_T \varnothing'\), there exists a c.e.\ degree \(D\) such that \(C<_T D<_T \varnothing'\)?
\end{question}
It would be unexpected if the upper density property already failed for low degrees \(C\) in some model of \(\mathsf{I}\Sigma_1\).

\bibliographystyle{plain}
\bibliography{ref.bib}
\end{document}